\newcommand{\mbf}{\mathbf}
\def\srnk{{\operatorname{s-rank}}}
\def\hom{\operatorname{Hom}}
\def\max{\operatorname{Max}}
\def\dim{\operatorname{dim}}
\def\rank{\operatorname{rank}}
\def\E{{\operatorname{E}}}            %Elementary matrix
\def\GL{{\operatorname{GL}}}          %General group
\def\og{{\operatorname{O}}}          % orthogonal
\def\EO{{\operatorname{EO}}}          %Elementary orthogonal
\newtheorem{theorem}{Theorem}[section]
\newtheorem{lemma}[theorem]{Lemma}
\newtheorem{corollary}[theorem]{Corollary}
\newtheorem{proposition}[theorem]{Proposition}
\theoremstyle{definition}
\newtheorem{definition}[theorem]{Definition}
\newtheorem*{ackn}{Acknowledgements}
\newcommand{\inD}[1][\relax]{\def\argone{#1}\def\temprelax{\relax}
  \ifx\argone\temprelax\right.\else\,\middle|#1\right.{}\fi}
\title{Normality and $K_1$-stability of Roy's elementary orthogonal group }
\author{A. A. Ambily}
\address{\newline Statistics and Mathematics Unit
\newline Indian Statistical Institute
\newline Bangalore 560 059
\newline India.
\newline {\emph{e-mail: } \tt ambily@isibang.ac.in}}
\keywords{Quadratic modules, Dickson--Siegel--Eichler--Roy
transformations, Normality, Stability}
\subjclass[2010]{ 19G05, 19B10, 19G99, 19B99, 20H25}
\begin{document}
%\date{\today}
\maketitle
\begin{abstract}
 In this paper, we prove the normality of the  
 Roy's elementary orthogonal group (Dickson--Siegel--Eichler--Roy or DSER group) 
 over a commutative ring  which was introduced by A. Roy in 
 \cite{MR0231844} under some conditions  on the hyperbolic rank. 
 We also establish a stability theorem for $K_1$ of  Roy's group. 
 We obtain a decomposition theorem for the elementary orthogonal group 
 which is used to deduce the stability theorem. 
\end{abstract}

\section{Introduction}

In 1960's, H. Bass initiated the study of the normal subgroup structure 
of linear groups. He  introduced a new notion of dimension of rings, 
called stable rank, and proved that the principal structure theorems hold 
for groups whose degrees are large with respect to the stable rank. Later, 
J.~S. Wilson, I.~Z. Golubchik and A.~A. Suslin made many other important 
contributions in this direction. In 1977, A.~A. Suslin proved that over 
any commutative ring $A$, the group $\E_n(A)$ is always normal in 
$\GL_n(A)$ when $n \ge 3$.

The normal subgroup structure of symplectic and classical unitary groups over 
rings were studied by V.~I. Kope{\u\i}ko in \cite{MR497932}, G. Taddei in \cite{MR676359}
and by Suslin-Kope{\u\i}ko in \cite{MR0469914}. Similar results were obtained for general
quadratic groups by A. Bak, V. Petrov, and G. Tang in \cite{MR2061845}, for general 
Hermitian groups by G. Tang in \cite{MR1609905} and A. Bak and G. Tang in \cite{MR1765866}, 
and for odd unitary groups by V. Petrov in \cite{MR2033642} and W. Yu in \cite{MR3003312}.

The stability problem for $K_1$ of quadratic forms was studied in 1960's and in early 
1970's by H. Bass, A. Bak, A. Roy, M. Kolster and L.~N. Vaserstein. The stability 
theorems relate unitary groups and their elementary subgroups in different ranges. 
The stability results for quadratic $K_1$ are due to A. Bak, V. Petrov and G. Tang 
(see \cite{MR2061845}), and for Hermitian $K_1$ are due to A. Bak and G. Tang (see \cite{MR1765866}). 
Recently, in \cite{MR3003312}, W. Yu proved the $K_1$-stability for odd unitary groups
which were introduced by V. Petrov. Stronger results for spaces over semilocal rings are
due to A. Roy and M.~Knebusch for quadratic spaces (see \cites{MR0231844,MR0252382}) 
and H. Reiter for Hermitian spaces (see \cite{MR0387276}).

In this paper, we study the Dickson--Siegel--Eichler--Roy (DSER) orthogonal group over
a commutative ring $A$, defined by A. Roy in \cite{MR0231844}. We prove results on 
normality as well as on stability. There are three normality theorems that we will 
prove and the stability results will be obtained under Bass's stable range condition. 
A useful tool in the proof will be a decomposition theorem for the elementary subgroup 
that we will establish in Section~\ref{decomp}. For proving stability, we adapt the method used in \cite{MR2061845,MR1765866}. We also need some commutator relations which we have proved
in \cite{aa}.

%%%%%%%%%%%%%%%%%%%%%%%%%%%%%%%%%%%%%%%%%%%%%%%%%%%%%%%%%%%%%%%%%%%%%%%%%%%%%%%%%%%%%%%%%%%%%%%%%%%%%

\section{Preliminaries}
Let $A$ be a commutative ring with identity in which 2 is invertible. Let $Q$ be a 
quadratic space and $P$ be a finitely generated projective module. Let $M=Q \!\perp\! H(P)$, 
where $H(P)$ denote the hyperbolic space. Let $\og_A(Q \!\perp\! H(P))$ denote the orthogonal 
group of the quadratic space $Q \!\perp\! H(P)$. Here, $Q$ is equipped with a non-singular 
quadratic form and $H(P)$ has the natural hyperbolic form. Let $\EO_A(Q\!\perp\! H(P)$ be 
the subgroup of $\og_A(Q\!\perp\! H(P))$ generated by $E_{\alpha},E_{\beta}^*$ for $\alpha 
\in \hom(Q,P), \beta \in \hom(Q,P^*)$.

We now recall some basic definitions from \cite{aarr}. Assume that $Q$ and $P$ are 
free modules of rank $n$ and $m$ respectively. In \cite{aarr}, by fixing bases for $Q$ and $P$, 
we defined the maps $\alpha_{ij} \in \hom(Q,P)$, $\beta_{ij}^* \in \hom(Q,P^*)$ for 
$\alpha \in \hom(Q,P)$ and $\beta \in \hom(Q,P^*)$. Then we extended these maps to $Q \oplus P \oplus P^*$ 
as follows: 
    \begin{align*}
	\alpha_{ij}(z,x,f) 
	&= \eta_i\circ p_i\circ\alpha\circ \eta_j\circ p_j(z,x,f) 
	 = \left( 0,\langle w_{ij},z \rangle x_i,0 \right), \\
	\beta_{ij}(z,x,f) 
	&= \eta_i\circ p_i\circ\beta\circ \eta_j\circ p_j(z,x,f) 
	 = \left(0,0,\langle v_{ij},z \rangle f_i\right),
    \end{align*}
      where $\{z_i : 1\leq i\leq n \}$ is a basis for $Q$, $\{x_i : 1\leq i\leq m \}$ 
      is a basis for $P$, $\{f_i : 1\leq i\leq m \}$ is a basis for $P^*$ and 
      $w_{ij},\, v_{ij} \in Q$.

In terms of these bases, the elementary orthogonal transformations 
$E_{\alpha_{ij}}$ and $E_{\beta_{ij}}^*$ for $1 \leq i \leq m$, $1 \leq j \leq n$ 
are given as follows:
   \begin{align*} 
      E_{\alpha_{ij}}(z,x,f) 
      &= \left(I-\alpha^*_{ij}+\alpha_{ij}-\frac{1}{2}\alpha_{ij}\alpha^*_{ij}\right)(z,x,f)\\
      &= \left(z-\langle f,x_i \rangle w_{ij},\;x+\langle w_{ij},z\rangle x_i -\langle f,x_i \rangle 	 		    q(w_{ij})x_i,\;f\right);\\
      E_{\beta_{ij}}^*(z,x,f) 
      &= \left(I-\beta_{ij}^*+\beta_{ij}-\frac{1}{2}\beta_{ij}\beta_{ij}^*\right)(z,x,f)\\
      &= \left(z-\langle f_i,x \rangle v_{ij},\;x,\;f+\langle v_{ij},z\rangle f_i -\langle x,f_i \rangle 		   q(v_{ij})f_i\;\right).
   \end{align*}

We denote $Q \!\perp\! H(P)$ by $Q \!\perp\! H(A)^m$, when $\rank(P) = m$. There is a natural 
embedding $\og_A(Q \!\perp\! H(A)^{m-1})\longrightarrow \og_A(Q \!\perp\! H(A)^{m})$ of groups  
which is called the stabilization homomorphism and is given by the following matrix equation.

\vspace{2mm}

Let $\begin{pmatrix}
    \mbf{a}'&\mbf{b}'&\mbf{c}'\\
    \mbf{d}'&\mbf{e}'&\mbf{f}'\\
    \mbf{g}'&\mbf{h}'&\mbf{j}'
     \end{pmatrix}$ be an element of $\og_A(Q\!\perp\! H(A)^{(m-1)})$. Then
\begin{equation}\label{stable}
\begin{pmatrix}
    \mbf{a}'&\mbf{b}'&\mbf{c}'\\
    \mbf{d}'&\mbf{e}'&\mbf{f}'\\
    \mbf{g}'&\mbf{h}'&\mbf{j}'
     \end{pmatrix} \mapsto \begin{pmatrix}
    \mbf{a}'&\vline&\mbf{b}'&0&\vline&\mbf{c}'&0\\\hline
    \mbf{d}'&\vline&\mbf{e}'&0&\vline&\mbf{f}'&0\\
    0&\vline&0&1&\vline&0&0\\\hline
    \mbf{g}'&\vline&\mbf{h}'&0&\vline&\mbf{j}'&0\\
    0&\vline&0&0&\vline&0&1\\
     \end{pmatrix} = \begin{pmatrix}
    \mbf{a}&\mbf{b}&\mbf{c}\\
    \mbf{d}&\mbf{e}&\mbf{f}\\
    \mbf{g}&\mbf{h}&\mbf{j}
     \end{pmatrix}.
\end{equation}
Using this equation, we define the stable orthogonal group and elementary 
orthogonal group as follows: 
\[\og_A =  \displaystyle{\lim_{m\rightarrow \infty}\og_A(Q\!\perp\! H(A)^m)}\mbox{ and }\]
\[\EO_A = \displaystyle{\lim_{m\rightarrow \infty}\EO_A(Q\!\perp\! H(A)^m)}.\]
We now define $$KO_{1,m}(Q \!\perp\! H(A)^{m}) = \og_A(Q \!\perp\! H(A)^m)/\EO_A(Q\!\perp\! H(A)^m),$$ 
which is a coset space. 

We now recall some basic definitions.
\begin{definition}[\cite{MR2235330}*{Chapter~I}]
 Let $A$ be a commutative ring with identity. A vector $(a_1,\ldots,a_n)$ with 
coefficients $a_i \in A$ is called unimodular if there are elements 
$b_1,\ldots, b_n \in A$ such that \[a_1b_1+\ldots+a_nb_n = 1.\]
\end{definition}

\begin{definition}[\cite{MR0284476}]
The ring $A$ is said to satisfy Bass's {\bf stable range condition} $SA_l$ 
in the formulation of L.~N. Vaserstein if, whenever $(a_1,\ldots a_{l+1})$ 
is a unimodular vector, there exist elements $b_1,\ldots, b_l \in A$ such 
that $(a_1+a_{l+1}b_1, \ldots, a_l+a_{l+1}b_l)$ is unimodular. It follows 
easily that $SA_l \Rightarrow SA_k$ for any $k \ge l$.
\end{definition}

\begin{definition}[\cite{MR2235330}*{p.320}]
The {\bf stable rank}, $\srnk$ $A$, of $A$ is defined to be the smallest 
positive integer $l$ such that $A$ satisfies $SA_l$. If no such $l$ exists, 
then the stable rank of $A$ can be taken to be infinite. If $A$ is a local 
ring, $\srnk$ $A$ $= 1$.
\end{definition}

In this paper, we prove the following {\bf normality theorems}.

\vspace{1mm}

\begin{enumerate}[label=(\roman*)]
 \item {\it $ \og_A(Q\!\perp\!H(A)^{m-1})$ normalizes $ \EO_A(Q\!\perp\! H(A)^{m}).$ In particular$,$ $\EO_A$ is a normal subgroup of $\og_A$.}
\item {\it If $m \geq \dim \max(A) + 2,$ then $ \og_A(Q\!\perp\!H(A)^{m})$ normalizes $
\EO_A(Q\!\perp\!H(A)^{m})$.}
\item {\it If $m > l,$ then $\og_A(Q\!\perp\!H(A)^{m})$ normalizes 
$\EO_A(Q\!\perp\!H(A)^{m})$  provided $A$ satisfies the stable range condition $SA_l$.}
\end{enumerate}

Using these normality theorems, we establish the following stability theorem for $K_1$.\\
{\it Suppose $A$ satisfies the stable range condition $SA_l$. Then, for all $m > l$, 
$KO_{1,m}(Q\!\perp\! H(A)^m)$ is a group. Further, the canonical map
 \[
   KO_{1,r}(Q \!\perp\! H(A)^{r}) \longrightarrow KO_{1,m}(Q\!\perp\! H(A)^m) 
 \] is surjective for $l\leq r < m$, and the canonical homomorphism 
 \[
   KO_{1,m}(Q \!\perp\! H(A)^m) \longrightarrow KO_{1,m+1}(Q \!\perp\! H(A)^{m+1})
 \] is an isomorphism.}

A key tool used in the proofs of the above theorems is a decomposition theorem 
for $\EO_A(Q\!\perp\! H(A)^{m})$. The decomposition involves the following subgroups.
\begin{align*}
C_m &= \left\langle [E_{\alpha_{ij}} , E_{\beta_{mk}}^* ] , [E_{\beta_{ij}}^* , E_{\gamma_{mk}}^* ] ,  E_{\beta_{mj}}^* : 1\leq i < m , 1\leq j,k \leq n \right\rangle,\\
D_m &= \left\langle [E_{\alpha_{mj}} , E_{\beta_{ik}}^* ] , [E_{\alpha_{mj}} , E_{\delta_{il}} ] , E_{\alpha_{mj}} :  1\leq i < m , 1\leq j,k \leq n \right\rangle,\\
G_m &= \left \langle E_{\beta_{jk}}^*, [E_{\alpha_{ir}}, E_{\beta_{jk}}^*],
             [E_{\beta_{ir}}^*, E_{\gamma_{jk}}^*] : 1 \leq i,j \leq m, 1 \leq r,k \leq m
	\right \rangle,\\
F_m &= \left \{ \eta\eta_1 : \eta \in \EO_A(Q\!\perp\! H(A)^{m-1}) \mbox{ and } \eta_1 \in C_m \right \}.
\end{align*}
It can be easily observed that $C_m \subseteq G_m$.

\begin{definition}
 Let $\theta \in \EO_A(Q \!\perp\! H(A)^m)$, where $Q$ has rank $n$.  A $FDG$-decomposition of $\theta$ is a product decomposition $\theta = \eta\xi\mu$, where $\eta \in F_m, \xi \in D_m$ and $\mu \in G_m$. 
 An $FDG$-decomposition $\theta = \eta\xi\mu$ will be called {\it reduced}  if the $(n+m-1,n+m)^{th}$ coefficient of $\eta$ is $0$.
\end{definition}

In Section~\ref{decomp}, we prove a decomposition theorem for $\EO_A(Q \!\perp\! H(A)^m)$.
%%%%%%%%%%%%%%%%%%%%%%%%%%%%%%%%%%%%%%%%%%%%%%%%%%%%%%%%%%%%%%%%%%%%%%%%%%%%%%%%%%%%%%%%%%%%%%%%%%%%%%%%%%%%%

\section{Roy's elementary group is normalized by smaller orthogonal group}
In this section, we prove that $ \og_A(Q\!\perp\!H(A)^{m-1})$ normalizes 
$\EO_A(Q\!\perp\!H(A)^{m})$.

Now, by \cite{aarr}*{Lemma~$3.4$}, each $E_{\alpha}, E_{\beta}^*$ for 
$\alpha \in \hom(Q,P)$ and $\beta \in \hom(Q,P^*)$ can be written as  
a product of $E_{\alpha_{ij}}, E_{\beta_{ij}}^*, 1\leq i\leq m ,1\leq j \leq n$. 
Hence we can consider $\EO_A(Q\!\perp\! H(P))$ as the group generated by 
$E_{\alpha_{ij}}$'s and  $E_{\beta_{ij}}^*$'s for $\alpha \in \hom(Q,P)$ and $\beta \in \hom(Q,P^*)$.

Now, by \cite{aarr}*{Lemma~$4.3$} and the commutator relations which we proved 
in \cite{aa}, we note the following useful interpretation.

\vspace{2mm}
\noindent
{\it The elementary orthogonal group $\EO_A(Q\!\perp\!H(P))$ is generated by 
the elements of the type $E_{\alpha_{ij}}, E_{\beta_{kl}}^* , 
\left[E_{\alpha_{ij}}, E_{\delta_{kl}} \right] , 
\left[E_{\alpha_{ij}}, E_{\beta_{kl}}^* \right] , 
\left[ E_{\gamma_{ij}}^*, E_{\beta_{kl}}^* \right]$ for 
$1\leq i,k \leq m , 1\leq j,l \leq n$ and $i \neq k $.}

\begin{proposition}\label{normal}
$ \og_A(Q \!\perp\! H(A)^{m-1})$ normalizes $ \EO_A(Q \!\perp\! H(A)^{m})$.
\end{proposition}

Towards this, we recall some of the commutator relations
which we proved in \cite{aa}*{Lemma~3.10, 3.14, 3.18, 3.19, 3.20}.

\begin{lemma}\label{lem1}
Let $\alpha,\delta, \xi \in  \hom(Q,P)$ and $\beta,\gamma, \mu \in \hom(Q,P^*)$.
Then, for any given $i,j,k,l$ such that  $1\leq i,j,t \leq m$ and $1 \leq k,l,r,s \leq n$,
we have the following commutator relations.
\addtolength{\jot}{1em}
\begin{enumerate}[label={\em(\roman*)}]
\item  $\left[E_{\beta_{ik}}^* , \left[E_{\alpha_{ir}} , E_{\gamma_{jl}}^* \right]\right]
= E_{\eta_{jk}}^* \left[ E_{\nu_{jk}}^* , E_{\zeta_{ik}}^* \right]$, 
where $\eta_{jk} = - \gamma_{jl} \alpha_{ir} \beta_{ik}^*$,
$\nu_{jk} = -\frac{1}{2} \gamma_{jl} \alpha_{ir} \beta_{ik}^*,$\\
$\zeta_{ik} = - \beta_{ik} $ and $i \neq j$.
%%%%%%%%%%%%%%%%%%%%%%%%%%%%%%%%%%%%%%%%%%%%%%%%%%%%%%%%%%%%%%%%%%%%%%%%%%%
\item  $\left[E_{\beta _{ik}}^* , \left[E_{\alpha_{ir}} , E_{\delta_{jl}}\right]\right]
= E_{\lambda_{jk}}\left[E_{\xi_{jk}} , E_{\zeta_{ik}}^*\right]$,
where $\lambda_{jk} = \delta_{jl} \alpha_{ir}^* \beta_{ik}, \xi_{jk} = \frac{1}{2} \delta_{jl} \alpha_{ir}^* \beta_{ik}$,\\
$\zeta_{ik} = \beta_{ik}$ and $i \neq j$.
\item $\left[\left[E_{\beta_{ir}}^* , E_{\gamma_{jl}}^*\right] , \left[E_{\alpha_{js}} , E_{\mu_{tk}}^* \right]\right]
= \left[E_{\zeta_{il}}^* , E_{\nu_{ts}}^* \right]$,
where $\zeta_{il} = -\beta_{ir} {\gamma_{jl}}^*$, $\nu_{ts} = \mu_{tk}{\alpha_{js}}^* $
and for $i,j,t$ distinct.
\item
$\left[\left[ E_{\alpha_{ir}} , E_{\delta_{jl}}\right] , \left[ E_{\xi_{tk}} , E_{\beta_{js}}^* \right]\right]
= \left[ E_{\lambda_{il}} , E_{\eta_{ts}} \right]$, 
where $\lambda_{il} = \alpha_{ir} {\delta_{jl}}^*$, $\eta_{ts} = \xi_{tk}{\beta_{js}}^* $
and for $i,j,t$ distinct.
\item $\left[\left[ E_{\alpha_{ir}} , E_{\beta_{jl}}^* \right] , \left[ E_{\delta_{js}} , E_{\gamma_{tk}}^* \right] \right]
= \left[ E_{\eta_{il}} , E_{\mu_{ts}}^* \right]$,
where $\eta_{il} = -\alpha_{ir} {\beta_{jl}}^*$,
$\mu_{ts} = \gamma_{tk}{\delta_{js}}^* $ and for $i,j,t$ distinct.
\end{enumerate}
In particular,
\begin{enumerate}[label={\em(\roman*)}]
\item
$ E_{\mu_{kj}}^* = \left[E_{\beta_{mj}}^* , \left[ E_{\alpha_{mr}} , E_{\gamma_{kl}}^* \right]\right] {\left[ E_{\nu_{kj}}^* , E_{\zeta_{mj}}^* \right]}^{-1}$,
\vspace{1mm}
\item
$ E_{\lambda_{kj}} = \left[ E_{\beta_{mj}}^* , \left[ E_{\alpha_{mr}} , E_{\delta_{kl}} \right]\right]
{\left[ E_{\xi_{kj}} , E_{\zeta_{mj}}^* \right]}^{-1}$,
\vspace{1mm}
\item$ \left[ E_{\zeta_{il}}^* , E_{\nu_{ks}}^*\right] = \left[\left[ E_{\beta_{ir}}^* , E_{\gamma_{ml}}^* \right] , \left[ E_{\alpha_{ms}} , E_{\mu_{kt}}^* \right]\right]$,
\vspace{1mm}
\item$ \left[ E_{\lambda_{il}} , E_{\eta_{ks}}\right] = \left[ \left[ E_{\alpha_{ir}} , E_{\delta_{ml}}\right] , \left[ E_{\xi_{kt}} , E_{\beta_{js}}^* \right]\right]$,
\vspace{1mm}
\item$ \left[ E_{\eta_{il}} , E_{\mu_{ks}}^* \right] = \left[ \left[ E_{\alpha_{ir}} , E_{\beta_{ml}}^* \right] , \left[ E_{\delta_{ms}} , E_{\gamma_{kt}}^* \right]\right]$.
\end{enumerate}
\end{lemma}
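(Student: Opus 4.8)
The plan is to obtain every identity in the statement directly from the explicit action of the generators on $Q\oplus P\oplus P^*$, combined with elementary commutator calculus; these are precisely the formulas recorded in \cite{aa}*{Lemma~3.10, 3.14, 3.18, 3.19, 3.20}, and the argument below reconstructs them.

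First I would isolate the ``read/write'' structure of the component maps. From the Preliminaries, $\alpha_{ij}$ reads the $j$-th coordinate of the $Q$-slot (through $w_{ij}$) and writes into the $x_i$-direction of $P$, while $\alpha_{ij}^*$ reads the $x_i$-direction of $P^*$ (by pairing with $x_i$) and writes back into $Q$; the maps $\beta_{ij}$, $\beta_{ij}^*$ behave analogously with $P^*$ in place of $P$. Consequently a composite such as $\gamma_{jl}\alpha_{ir}\beta_{ik}^*$ is again a map of the same elementary type, now carried by a single pair of basis vectors, whereas a composite whose inner ``write'' coordinate fails to match the next ``read'' coordinate vanishes identically. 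This is the mechanism that forces the right-hand sides to collapse: under the distinctness hypotheses ($i\neq j$ in (i)--(ii), and $i,j,t$ distinct in (iii)--(v)) all but one family of cross terms is annihilated.

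With this in hand I would compute the three level-one commutators $[E_{\alpha_{ir}},E_{\gamma_{jl}}^*]$, $[E_{\alpha_{ir}},E_{\delta_{jl}}]$ and $[E_{\beta_{ir}}^*,E_{\gamma_{jl}}^*]$ by substituting $E_\alpha=I-\alpha^*+\alpha-\tfrac12\alpha\alpha^*$ and its starred analogue and expanding; the hypothesis $i\neq j$ makes the surviving cross terms pair up into a single elementary factor. For the nested relations (i)--(ii) I would then commute the remaining generator with the level-one commutator just found, expanding via $[a,bc]=[a,b]\,{}^{b}[a,c]$ and $[ab,c]={}^{a}[b,c]\,[a,c]$ and re-collapsing with the composition rules; the coefficients $-\gamma_{jl}\alpha_{ir}\beta_{ik}^*$, $-\tfrac12\gamma_{jl}\alpha_{ir}\beta_{ik}^*$, $-\beta_{ik}$ (and their counterparts in (ii)) are exactly what survives once the quadratic correction $-\tfrac12\alpha\alpha^*$ is tracked. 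For the commutator-of-commutators relations (iii)--(v) I would apply the three-subgroup (Hall--Witt) lemma together with the level-one formulas, distinctness of $i,j,t$ leaving precisely one nonzero commutator on each side. The five ``in particular'' statements are then immediate rearrangements: one multiplies through by the inverse of the trailing commutator to isolate the single generator $E_{\mu_{kj}}^*$ or $E_{\lambda_{kj}}$ (or, in (iii)--(v), the single surviving bracket) and specialises the free outer index to $m$.

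The main difficulty is bookkeeping rather than conceptual. The delicate points are keeping the $\tfrac12$-coefficients and signs coming from the corrections $-\tfrac12\alpha\alpha^*$ consistent through each expansion, and verifying at every stage that the distinctness hypotheses genuinely annihilate all the unwanted cross terms so that the long products telescope to the compact expressions displayed above; this is the content that was established in \cite{aa} and that I would re-derive term by term.
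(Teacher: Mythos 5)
Your proposal is correct and follows essentially the paper's own route: the paper in fact gives no in-text proof of this lemma at all, recalling it verbatim from \cite{aa}*{Lemmas~3.10, 3.14, 3.18, 3.19, 3.20}, where the relations are established precisely by the method you outline --- expanding $E_{\alpha}=I+\alpha-\alpha^{*}-\tfrac{1}{2}\alpha\alpha^{*}$ and its starred analogue on $Q\oplus P\oplus P^{*}$, and collapsing composites of the rank-one component maps $\alpha_{ij},\beta_{ij}$ via the \emph{read/write} index-matching you describe, with the distinctness hypotheses annihilating the mismatched cross terms. The one caveat is that for (iii)--(v) the Hall--Witt three-subgroup lemma is not really the operative tool (nor is it obviously adequate for identities of the shape $\left[[a,b],[c,d]\right]=[e,f]$); what actually does the work there is your stated fallback --- computing the level-one commutators explicitly as unipotent transformations and multiplying them out, with distinctness of $i,j,t$ leaving a single surviving term --- so this is a stylistic detour rather than a genuine gap.
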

\begin{lemma}
The elementary orthogonal group $\EO_A(Q\!\perp\!H(A)^m)$ is generated by those elementary generators
which have $m$ as one of the subscripts.
\end{lemma}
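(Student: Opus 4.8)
The plan is to let $H$ denote the subgroup of $\EO_A(Q\!\perp\!H(A)^m)$ generated by all those elementary generators (in the sense of the five-type generating set recalled just before Proposition~\ref{normal}) that carry $m$ as one of their subscripts, and to prove $H = \EO_A(Q\!\perp\!H(A)^m)$. Since $H \subseteq \EO_A(Q\!\perp\!H(A)^m)$ is clear, the content is the reverse inclusion. By the interpretation quoted above, $\EO_A(Q\!\perp\!H(A)^m)$ is generated by the five types $E_{\alpha_{ij}}$, $E_{\beta_{kl}}^*$, $[E_{\alpha_{ij}}, E_{\delta_{kl}}]$, $[E_{\alpha_{ij}}, E_{\beta_{kl}}^*]$, $[E_{\gamma_{ij}}^*, E_{\beta_{kl}}^*]$ with $i \neq k$; hence it suffices to show that every generator of each of these five types lies in $H$.

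Those generators already having $m$ as a subscript lie in $H$ by definition, so the whole task reduces to the generators in which $m$ does not occur. Here I would invoke, one type at a time, the five ``In particular'' identities of Lemma~\ref{lem1}: relation (ii) rewrites a single unstarred generator $E_{\lambda_{kj}}$ with $k \neq m$, relation (i) a single starred generator $E_{\mu_{kj}}^*$ with $k \neq m$, and relations (iv), (v), (iii) rewrite the three commutator types without $m$, respectively. In each identity the left-hand side is the generator-without-$m$ to be absorbed, while every factor on the right-hand side is manifestly a generator carrying $m$: the inner commutators (for instance $[E_{\alpha_{mr}}, E_{\gamma_{kl}}^*]$ in (i), or $[E_{\beta_{ir}}^*, E_{\gamma_{ml}}^*]$ and $[E_{\alpha_{ms}}, E_{\mu_{kt}}^*]$ in (iii)) each have $m$ in one slot, and the remaining two-subscript commutators such as $[E_{\nu_{kj}}^*, E_{\zeta_{mj}}^*]$ and $[E_{\xi_{kj}}, E_{\zeta_{mj}}^*]$ are themselves generators of the two-starred (resp. mixed) type with $m$ occurring in one slot. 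Because $H$ is a subgroup, any commutator of two of its elements again lies in $H$; consequently each right-hand side lies in $H$, whence so does each generator-without-$m$.

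The one point demanding care is realizability: the identities of Lemma~\ref{lem1} produce, on their left-hand sides, generators attached to the composite homomorphisms $\mu, \lambda, \ldots$ (for example $\mu = -\gamma_{kl}\alpha_{mr}\beta_{mj}^*$ in (i)), and to conclude that \emph{every} generator-without-$m$ is absorbed I must know that these composites run over all the relevant elementary homomorphisms as the input data $\alpha, \beta, \gamma, \delta$ and the auxiliary middle index range freely. This I would deduce from the explicit description of the maps $\alpha_{ij}, \beta_{ij}^*$ fixed by the chosen bases, combined with the fact (Lemma~$3.4$ of \cite{aarr}) that $\EO_A(Q\!\perp\! H(P))$ is already generated by the single transformations $E_{\alpha_{ij}}, E_{\beta_{ij}}^*$ for arbitrary $\alpha,\beta$: it then suffices to absorb these single generators with $i \neq m$, and the freedom in choosing the input homomorphisms is exactly what realizes each of them. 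Granting this, every one of the five generator types lies in $H$, so $H = \EO_A(Q\!\perp\!H(A)^m)$ and the lemma follows. I expect the verification of realizability to be the main obstacle, the membership bookkeeping of the second paragraph being routine once the identities of Lemma~\ref{lem1} are in hand.
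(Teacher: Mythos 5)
Your proposal follows essentially the same route as the paper: the paper's proof of this lemma simply invokes Lemma~\ref{lem1} and lists the resulting generator types carrying the subscript $m$, which is exactly your absorption of the five generator types via the ``In particular'' identities. The realizability issue you flag (that the composite homomorphisms $\lambda,\mu,\ldots$ must sweep out all elementary maps as the inputs vary) is genuine but is left entirely implicit in the paper, so your write-up is, if anything, the more careful of the two.
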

\begin{proof}
 The proof follows from Lemma~\ref{lem1}. The relations in Lemma~\ref{lem1} show that the group $\EO_A(Q\!\perp\! H(A)^m) $ is generated by the elements of type $ E_{\alpha_{mj}} , E_{\beta_{mk}}^* ,
[E_{\alpha_{ij}} , E_{\beta_{mk}}^*]$ , $[ E_{\alpha_{mj}} , E_{\beta_{ik}}^* ] ,
[E_{\alpha_{mj}} , E_{\delta_{il}} ] , [E_{\beta_{ij}}^* , E_{\gamma_{mk}}^* ],$ when $Q$ and $P$ are free $A$-modules.
\end{proof}
As a consequence of the above lemma, it follows that the groups $D_m$ and
$C_m $ generate the elementary group $\EO_A(Q \!\perp\! H(A)^m)$. We now prove a
normality result for the elementary orthogonal group $\EO_A(Q\!\perp\! H(A)^{m})$.
\begin{proposition}\label{normal}
$ \og_A(Q \!\perp\! H(A)^{m-1})$ normalizes $ \EO_A(Q \!\perp\! H(A)^{m})$.
\end{proposition}
\begin{proof}
To prove this, it is sufficient to prove that $D_m $ and $ C_m $
are normalized by ${\og_A(Q \!\perp\! H(A)^{m-1})}$, and we do this by
direct matrix calculation.

We consider the matrix representation of the elements of $\og_A(Q\!\perp\! H(A)^m)$.

\vspace{2mm}

Let $T = \begin{pmatrix}
    \mbf{a}&\mbf{b}&\mbf{c}\\
    \mbf{d}&\mbf{e}&\mbf{f}\\
    \mbf{g}&\mbf{h}&\mbf{j}
     \end{pmatrix} \in \og_A(Q\!\perp\! H(A)^m)$. Then
\begin{equation}\label{identity}
T^t \Psi T = \Psi, \end{equation} where $\Psi = \varphi \!\perp\! \begin{pmatrix}
                                  0&I\\
                                  I&0 \end{pmatrix}$
                                  is the matrix of the quadratic form on $Q \!\perp\! H(A)^m$. Here $\varphi$ denotes the matrix corresponding to the nondegenerate bilinear form and $\begin{pmatrix}
                                  0&I\\
                                  I&0 \end{pmatrix}$ is the matrix of the bilinear form on the hyperbolic space.
This equation is equivalent to the following set of equations.
\begin{align*}
\mbf{a}^t\varphi \mbf{a} +\mbf{g}^t \mbf{d} + \mbf{d}^t \mbf{g} &= \varphi, &  \mbf{b}^t\varphi \mbf{a} +\mbf{h}^t \mbf{d} + \mbf{e}^t \mbf{g} &= 0, &\mbf{c}^t\varphi \mbf{a} +\mbf{j}^t \mbf{d} + \mbf{f}^t \mbf{g} &= 0,\\
\mbf{a}^t\varphi \mbf{b} +\mbf{g}^t \mbf{e} + \mbf{d}^t \mbf{h} &= 0, & \mbf{b}^t\varphi \mbf{b} +\mbf{h}^t \mbf{e} + \mbf{e}^t \mbf{h} &= 0, & \mbf{c}^t\varphi \mbf{b} +\mbf{j}^t \mbf{e} + \mbf{f}^t \mbf{h} &= I,\\
\mbf{a}^t\varphi \mbf{c} +\mbf{g}^t \mbf{f} + \mbf{d}^t \mbf{j} &= 0, & \mbf{b}^t\varphi \mbf{c} +\mbf{h}^t \mbf{f} + \mbf{e}^t \mbf{j} &= I, & \mbf{c}^t\varphi \mbf{c} +\mbf{j}^t \mbf{f} + \mbf{f}^t \mbf{j} &= 0.
\end{align*}
These equations are equivalent to the equation
\[T^{-1} = \begin{pmatrix}
    \varphi^{-1} \mbf{a}^t\varphi&\varphi^{-1}\mbf{g}^t&\varphi^{-1}\mbf{d}^t\\
    \mbf{c}^t\varphi&\mbf{j}^t&\mbf{f}^t\\
    \mbf{b}^t\varphi&\mbf{h}^t&\mbf{e}^t
     \end{pmatrix}.\]
     %The stabilization homomorphism $O_A(Q\!\perp\! H(A)^{(m-1)}) \rightarrow O_A(Q\!\perp\! H(A)^m)$ is given by
% \begin{equation}\label{stable}
% \begin{pmatrix}
%     A'&B'&C'\\
%     D'&E'&F'\\
%     G'&H'&J'
%      \end{pmatrix} \mapsto \begin{pmatrix}
%     A'&\vline&B'&0&\vline&C'&0\\\hline
%     D'&\vline&E'&0&\vline&F'&0\\
%     0&\vline&0&1&\vline&0&0\\\hline
%     G'&\vline&H'&0&\vline&J'&0\\
%     0&\vline&0&0&\vline&0&1\\
%      \end{pmatrix} = \begin{pmatrix}
%     A&B&C\\
%     D&E&F\\
%     G&H&J
%      \end{pmatrix} 
% \end{equation}
We now consider the generators for the subgroups $D_m$ and $C_m$ of $\EO_A(Q\!\perp\! H(A)^m)$ and prove that they are normalized by an element in $\og_A(Q\!\perp\!H(A)^{m-1})$. 

Consider $T \in \og_A(Q \!\perp\!H(A)^{m-1})$ as an element in $\og_A(Q\!\perp\!H(A)^m)$ by the stabilization homomorphism. Then we conjugate the elementary generators of $\EO_A(Q\!\perp\!H(A)^m)$ and write the conjugated element as a product of elementary generators.

\vspace{2mm}

Corresponding to the elementary generator $E_{\alpha_{mj}}$, we have

\addtolength{\jot}{1em}
     \begin{align*}
       T^{-1} E_{\alpha_{mj}} T &= \begin{pmatrix}
                    I&0&-\phi^{-1}\mbf{a}^t{\alpha_{mj}}^t \mbf{j}\\
                    \mbf{j}^t {\alpha_{mj}} \mbf{a}& I + \mbf{j}^t {\alpha_{mj}} \mbf{b} & \mbf{j}^t {\alpha_{mj}} \mbf{c} - \mbf{c}^t {\alpha_{mj}}^t \mbf{j}- \frac{1}{2} \mbf{j}^t{\alpha_{mj}} {\alpha_{mj}}^* \mbf{j}\\
                    0&0&I-\mbf{b}^t {\alpha_{mj}}^t \mbf{j}
                    \end{pmatrix}\\
                    \vspace*{2cm}
                  &=[E_{\mbf{j}^t{\alpha_{mj}} \mbf{b} \mbf{c}^t \phi},E_{\frac{\mbf{j}^t{\alpha_{mj}}}{2}}][E_{\mbf{c}^t\phi},E_{\mbf{j}^t{\alpha_{mj}}}][E_{\mbf{b}^t\phi}^* , E_{\mbf{j}^t{\alpha_{mj}}}]E_{\mbf{j}^t{\alpha_{mj}} \mbf{a}}.
                  \end{align*}
                  %%%%%%%%%%%%%%%%%%%%%%%%%%%%%%%%%%%%%%%%%%%%%%%%%%%%%%%%%%%%%%%%%
       Corresponding to the elementary generator $E_{\beta_{mj}}^*$, we have
  \begin{align*}
  T^{-1} E_{\beta_{mj}}^* T &= \begin{pmatrix}
                    I&-\phi^{-1}\mbf{a}^t\beta_{mj}^t \mbf{e} & 0\\
                    0 & I - \mbf{c}^t \beta_{mj}^t \mbf{e} & 0\\
                    \mbf{e}^t \beta_{mj} \mbf{a}& \mbf{e}^t \beta_{mj} \mbf{b} - \mbf{b}^t \beta_{mj}^t \mbf{e} - \frac{1}{2} \mbf{e}^t \beta_{mj} \beta_{mj}^* \mbf{e} & I + \mbf{e}^t \beta_{mj} \mbf{c}
                    \end{pmatrix}\\
                  &= \left[E_{\mbf{e}^t\beta_{mj} \mbf{c} \mbf{b}^t \phi}^*,E_{\frac{\mbf{e}^t\beta_{mj}}{2}}^*\right]\left[E_{\mbf{b}^t\phi}^*,E_{\mbf{e}^t\beta_{mj}}^* \right] \left[E_{\mbf{c}^t \phi},E_{\mbf{e}^t\beta_{mj}}^*\right] E_{(\mbf{e}^t\beta_{mj}\mbf{a})} .
                    \end{align*}
                  %%%%%%%%%%%%%%%%%%%%%%%%%%%%%%%%%%%%%%%%%%%%%%%%%%%%%%%%%%%%%%%%%%%%%%%%%%
       Corresponding to the elementary generator $[E_{\alpha_{mj}}, E_{\beta_{kl}}^* ]$, we have
\begin{align*}
       T^{-1} [E_{\alpha_{mj}}, E_{\beta_{kl}}^* ] T
                  &= \begin{pmatrix}
                    I& 0 & \phi^{-1} (\mbf{d}^t \beta_{kl}\alpha_{mj}^* \mbf{j})^t\\
                    - \mbf{j}^t \alpha_{mj}\phi^{-1} \beta_{kl}^t \mbf{d} & I - \mbf{j}^t \alpha_{mj} \beta_{kl}^* \mbf{e} & \mbf{f}^t \beta_{kl}\phi^{-1} \alpha_{mj}^t \mbf{j} - \mbf{j}^t \alpha_{mj} \phi^{-1} \beta_{kl}^t \mbf{f}\\
                    0&0& I + \mbf{e}^t \beta_{kl}\alpha_{mj}^*  \mbf{j}
                    \end{pmatrix}\\
                  &= \left[ E_{(\frac{\mbf{j}^t \alpha_{mj}}{2})}, E_{(\mbf{j}^t \alpha_{mj}\phi^{-1} {\beta_{kl}}^t \mbf{f} \mbf{e}^t \beta_{kl} )}\right] \left[E_{(\mbf{j}^t \alpha_{mj})}, E_{(\mbf{f}^t\beta_{kl})}\right]\\
                 &\hspace{5mm}\left[ E_{(\mbf{j}^t \alpha_{mj})},E_{(\mbf{e}^t\beta_{kl})}^* \right] E_{-(\mbf{j}^t \alpha_{mj}\phi^{-1} \beta_{kl}^t \mbf{d})}.
                  \end{align*}
                  %%%%%%%%%%%%%%%%%%%%%%%%%%%%%%%%%%%%%%%%%%%%%%%%%%%%%%
                  Corresponding to the elementary generator $ [E_{\alpha_{ij}}, E_{\beta_{mk}}^* ]$, we have
    \begin{align*}
                  T^{-1} [E_{\alpha_{ij}}, E_{\beta_{mk}}^* ] T
                 &= \begin{pmatrix}
                    I& -\phi^{-1} (\mbf{e}^t \beta_{mk}\alpha_{ij}^* \mbf{g})^t &0\\
                    0& I - \mbf{j}^t \alpha_{ij} \beta_{mk}^* \mbf{e} &0\\
                    \mbf{e}^t \beta_{mk} \alpha_{ij}^*\mbf{g} & \mbf{e}^t \beta_{mk} \alpha_{ij}^* \mbf{h} - \mbf{h}^t \alpha_{ij} \beta_{mk}^* \mbf{e}&I + \mbf{e}^t \beta_{mk}\alpha_{ij}^*  \mbf{j}
                    \end{pmatrix}\\
                 &= \left[ E_{(\mbf{e}^t \beta_{mk}\phi^{-1} \alpha_{ij}^t \mbf{j} \mbf{h}^t \alpha_{ij})}^*, E_{(\frac{\mbf{e}^t\beta_{mk}}{2})}^*\right] \left[E_{(\mbf{h}^t \alpha_{ij})}^*, E_{(\mbf{e}^t \beta_{mk})}^*\right]\\
                          &\hspace{5mm}\left[E_{(\mbf{j}^t \alpha_{ij})}, E_{(\mbf{e}^t\beta_{mk})}^*\right] E_{(\mbf{e}^t \beta_{mk} \phi^{-1} \alpha_{ij}^t \mbf{g})}^*.
                          \end{align*}
                  %%%%%%%%%%%%%%%%%%%%%%%%%%%%%%%%%%%%%%%%%%%%%%%%%%%%%%%%%%%%%%%%%%%%%%%%%
                  Corresponding to the elementary generator $[E_{\alpha_{mk}}, E_{\delta_{jl}} ]$, we have
\begin{align*}
                  T^{-1} [E_{\alpha_{mk}}, E_{\delta_{jl}} ] T
                  & = \begin{pmatrix}
                        I& 0 & \phi^{-1} \mbf{g}^t \delta_{jl} \alpha_{mk}^* \mbf{j}\\
                        -\mbf{j}^t \alpha_{mk} \delta_{jl}^* \mbf{g} & I - \mbf{j}^t \alpha_{mk} \delta_{jl}^* \mbf{h} & \mbf{j}^t(\delta_{jl}\alpha_{mk}^* - \alpha_{mk}\delta_{jl}^*)\mbf{j}\\
                        0 & 0 & I + \mbf{h}^t \delta_{jl} \alpha_{mk}^* \mbf{j}
                        \end{pmatrix}\\
                   & = \left[E_{(\frac{1}{2}\mbf{j}^t \alpha_{mk})} , E_{(\mbf{j}^t\alpha_{mk}\delta_{jl}^* \mbf{h} \mbf{j}^t \delta_{jl})}\right]\left[E_{(\mbf{j}^t \alpha_{mk})}, E_{(\mbf{h}^t \delta_{jl})}^*\right]\left[E_{(\alpha_{mk})}, E_{(\mbf{j}^t \delta_{jl})}\right] E_{(-\mbf{j}^t \alpha_{mk} \delta_{jl}^* \mbf{g})}.
                  \end{align*}
                 Corresponding to the elementary generator $[E_{\beta_{mk}}^*, E_{\gamma_{jl}}^* ]$, we have
    \begin{align*}
          T^{-1} [E_{\beta_{mk}}^*, E_{\gamma_{jl}}^* ] T
                  & = \begin{pmatrix}
                        I& \phi^{-1} \mbf{d}^t \gamma_{jl} \phi^{-1} \beta_{mk}^t \mbf{e} & 0\\
                        0 & I + \mbf{f}^t \gamma_{jl} \phi^{-1} {\beta_{mk}}^t \mbf{e} & 0 \\
                        -\mbf{e}^t \beta_{mk} \phi^{-1} \gamma_{jl}^t \mbf{d} & \mbf{e}^t (\gamma_{jl}\beta_{mk}^* - \beta_{mk}\gamma_{jl}^*) \mbf{e}  & I - \mbf{e}^t \beta_{mj} \phi^{-1} \gamma_{jl}^t \mbf{f}
                        \end{pmatrix}\\
                        & = [E_{(\frac{\mbf{e}^t \beta_{mk}}{2})}^* , E_{(\mbf{e}^t\beta_{mk}\gamma_{jl}^* \mbf{e} \mbf{f}^t \gamma_{jl})}^*][E_{(\mbf{e}^t \beta_{mk})}^*,  E_{(\mbf{e}^t \gamma_{jl})}^*]\\ 
                    &\hspace{5mm}[E_{(\mbf{e}^t\beta_{mk})}^*, E_{(\mbf{f}^t \gamma_{jl})}] E_{(-\mbf{e}^t \beta_{mk} \gamma_{jl}^* \mbf{d})}.
      \end{align*}

      Now it follows from the above equations that $C_m$ and $D_m$ are normalized by $\og_A(Q \!\perp\! H(A)^{m-1})$. Hence the proposition follows.
\end{proof}
\begin{corollary}
  $\EO_A $ is a normal subgroup of $ \og_A$.
\end{corollary}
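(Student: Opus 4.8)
The plan is to reduce the statement about the stable groups to the finite-level Proposition~\ref{normal} by exploiting the direct limit structure. Recall that $\og_A = \displaystyle{\lim_{m\rightarrow\infty}}\og_A(Q\!\perp\! H(A)^m)$ and $\EO_A = \displaystyle{\lim_{m\rightarrow\infty}}\EO_A(Q\!\perp\! H(A)^m)$ are the direct limits taken along the stabilization homomorphisms of equation~\eqref{stable}, which are injective group homomorphisms compatible with the inclusions $\EO_A(Q\!\perp\! H(A)^m)\subseteq \og_A(Q\!\perp\! H(A)^m)$ at every level.

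First I would take arbitrary elements $g\in\og_A$ and $\varepsilon\in\EO_A$ and use the defining property of a direct limit: each already lives at some finite stage. Concretely, there are integers $p,q$ with $g\in\og_A(Q\!\perp\! H(A)^p)$ and $\varepsilon\in\EO_A(Q\!\perp\! H(A)^q)$. Setting $m=\max(p,q)+1$, the stabilization maps place $g$ inside $\og_A(Q\!\perp\! H(A)^{m-1})$ (since $m-1=\max(p,q)\ge p$) and $\varepsilon$ inside $\EO_A(Q\!\perp\! H(A)^{m-1})\subseteq\EO_A(Q\!\perp\! H(A)^m)$.

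Then I would invoke Proposition~\ref{normal}: because $\og_A(Q\!\perp\! H(A)^{m-1})$ normalizes $\EO_A(Q\!\perp\! H(A)^m)$, we obtain $g\varepsilon g^{-1}\in\EO_A(Q\!\perp\! H(A)^m)\subseteq\EO_A$. Since $g\in\og_A$ and $\varepsilon\in\EO_A$ were arbitrary, this shows $\EO_A\trianglelefteq\og_A$, which is the assertion of the corollary.

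The only point requiring care---and the place where I would be most attentive---is the compatibility of conjugation with passage to the limit: one must be sure that the product $g\varepsilon g^{-1}$ formed in the stable group $\og_A$ coincides with the product computed at level $m$. This is immediate from equation~\eqref{stable}, since the stabilization map is a multiplicative embedding that preserves the block structure, so conjugation performed at stage $m$ maps forward consistently to the limit. This compatibility is the hinge that allows the finite-level normality of Proposition~\ref{normal} to upgrade verbatim to the stable statement; no further estimates or rank conditions are needed for this particular corollary.
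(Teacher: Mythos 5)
Your argument is correct and is precisely the intended one: the paper leaves the corollary without proof because it follows immediately from Proposition~\ref{normal} by passing to the direct limit, exactly as you do. Your extra remark on the compatibility of conjugation with the stabilization maps is the right point to check and is indeed immediate from equation~\eqref{stable}.
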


\section{Normality of Roy's elementary group under condition on hyperbolic rank}
In this section, we prove that $\EO_A(Q\!\perp\! H(A)^m)$ is normal in $\og_A(Q\!\perp\! H(A)^m)$ under a condition on the hyperbolic rank. First, we prove the normality when the hyperbolic rank is at least $d+2$, where $d= \dim \max(A)$ .
\begin{theorem}
 $\EO_A(Q\!\perp\! H(A)^m)$ is normal in $\og_A(Q\!\perp\! H(A)^m)$ when $m \geq d+2$, where $d= \dim \max(A)$ .
\end{theorem}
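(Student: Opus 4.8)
The plan is to reduce the normality statement to a surjective stability result and then invoke Proposition~\ref{normal}. Concretely, it suffices to establish the decomposition
\[
\og_A(Q\!\perp\! H(A)^{m}) = \EO_A(Q\!\perp\! H(A)^{m})\cdot \og_A(Q\!\perp\! H(A)^{m-1})
\]
whenever $m \geq d+2$. Granting this, any $g \in \og_A(Q\!\perp\! H(A)^m)$ may be written as $g = \varepsilon h$ with $\varepsilon \in \EO_A(Q\!\perp\! H(A)^m)$ and $h \in \og_A(Q\!\perp\! H(A)^{m-1})$; then for $\mu \in \EO_A(Q\!\perp\! H(A)^m)$ one has $g\mu g^{-1} = \varepsilon (h\mu h^{-1})\varepsilon^{-1}$, where $h\mu h^{-1} \in \EO_A(Q\!\perp\! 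H(A)^m)$ by Proposition~\ref{normal} and the outer conjugation by $\varepsilon$ stays inside the subgroup $\EO_A(Q\!\perp\! H(A)^m)$. This yields the desired normality.

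To prove the decomposition, I would track the action of $g$ on the last hyperbolic summand. Let $u = (0, x_m, 0) \in Q \oplus P \oplus P^*$ be the standard isotropic generator attached to the $m$-th hyperbolic plane, and set $v = g(u) = (z,x,f)$. Since $g$ is orthogonal, $v$ is a unimodular isotropic vector. The key step is to show that, under the hypothesis $m \geq d+2$, the group $\EO_A(Q\!\perp\! H(A)^m)$ acts transitively on such vectors, so that there exists $\varepsilon_1 \in \EO_A(Q\!\perp\! H(A)^m)$ with $\varepsilon_1 v = u$. The transformations $E_{\alpha_{ij}}$ and $E^*_{\beta_{ij}}$ act on the components $(z,x,f)$ exactly by the formulas recorded in the Preliminaries, and I would use them to clear the components in turn: first arrange that the $P$-component $x \in A^m$ has a unimodular entry, then use that entry to kill the $Q$-component $z$ and the $P^*$-component $f$, and finally normalize $x$ to $x_m$. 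The hypothesis enters precisely here, via Bass's bound $\srnk A \leq d+1$: since $m \geq d+2 > \srnk A$, a unimodular vector whose $P$-length exceeds the stable rank can be shortened to one possessing a unimodular coordinate, which is what makes the first clearing step possible.

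Having arranged $\varepsilon_1 g(u) = u$, it remains to fix the dual isotropic vector $u^{\ast} = (0,0,f_m)$ as well. Because $\varepsilon_1 g$ is orthogonal and fixes $u$, the image $(\varepsilon_1 g)(u^{\ast})$ is an isotropic vector pairing to $1$ with $u$, hence of the form $u^{\ast} + w$ with $w \in u^{\perp}$; a single Eichler-type elementary transformation $\varepsilon_2 \in \EO_A(Q\!\perp\! H(A)^m)$ removes $w$. Then $\varepsilon_2\varepsilon_1 g$ fixes the whole hyperbolic plane $\langle u, u^{\ast}\rangle$ pointwise and therefore lies in the image of $\og_A(Q\!\perp\! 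H(A)^{m-1})$ under the stabilization homomorphism \eqref{stable}, giving $g = \varepsilon_1^{-1}\varepsilon_2^{-1} h$ with $h \in \og_A(Q\!\perp\! H(A)^{m-1})$, as required.

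The main obstacle is the transitivity statement of the second paragraph: establishing that the \emph{restricted} elementary orthogonal group, rather than the full elementary linear group, acts transitively on unimodular isotropic vectors. The delicate point is that the admissible moves are constrained to mix the $Q$, $P$, and $P^*$ components only in the specific way dictated by $E_{\alpha_{ij}}$ and $E^*_{\beta_{ij}}$, so one must order the clearing steps carefully and verify at each stage that the stable range condition supplied by $m \geq d+2$ is strong enough to produce the unimodular coordinate needed to continue. Once this is in hand, everything else is routine bookkeeping with the component formulas and the inverse formula for $T^{-1}$ already recorded above.
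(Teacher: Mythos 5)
Your argument is correct and follows essentially the same route as the paper: both reduce the theorem to the factorization $\og_A(Q\!\perp\! H(A)^{m}) = \EO_A(Q\!\perp\! H(A)^{m})\cdot \og_A(Q\!\perp\! H(A)^{m-1})$, obtained from transitivity of the elementary orthogonal group on hyperbolic pairs, and then conclude via Proposition~\ref{normal}. The only difference is that the transitivity statement you single out as the main obstacle is precisely what the paper outsources to Roy's results (\cite{MR0231844}, Theorem~7.1 and Corollary~6.4) rather than reproving it by the stable-range clearing procedure you sketch.
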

\begin{proof}
 By \cite{MR0231844}*{Theorem~7.1}, it follows that $\EO_A(Q \!\perp\! H(A)^m)$
 acts transitively on hyperbolic pairs. In the case of semi-local rings, by \cite{MR0231844}*{Theorem~$8.1'$}, the same holds for $m \geq 1$.

 For, if $\alpha \in \og_A(Q\!\perp\! H(A)^m)$ and $(e_1,f_1)$ is a hyperbolic pair, then, by \cite{MR0231844}*{Corollary~6.4}, $(\alpha e_1, \alpha f_1)$ and $(e_1,f_1)$ are in the same orbit of $\EO_A(Q\!\perp\! H(A)^m)$. Let $e$ be a map which takes one orbit to the other. Therefore $e\alpha$ fixes $(e_1,f_1)$ and hence $e\alpha \in \og_A(Q\!\perp\! H(A)^{m-1})$, whence so does ${(e \alpha)}^{-1}$. Now, by Proposition~\ref{normal}, it follows that $(e\alpha)^{-1}$ normalizes the elementary orthogonal group $\EO_A(Q \!\perp\! H(A)^m)$. This implies that $\alpha^{-1}$ normalizes $\EO_A(Q \!\perp\! H(A)^m).$
\end{proof}

\section{A decomposition theorem}\label{decomp}

In this section, we prove a decomposition of Roy's elementary orthogonal group
under the stable range condition. We start with the following lemma.
\begin{lemma}\label{slem1}
 Let $m \ge l+1$. Then, for any $\sigma \in \og_A(Q \!\perp\! H(A)^m)$, there is an element $\varrho \in G_m$ such that $\sigma\varrho$ has $1$ in its $(n+m,n+m)^{th}$ position.
\end{lemma}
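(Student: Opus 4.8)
The plan is to extract the target coefficient from the action of $\varrho$ on the single isotropic basis vector $x_m=e_{n+m}$, and thereby convert the statement into a unimodular-vector problem for the $(n+m)$-th row of $\sigma$.

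First I would record the elementary reduction. Write $\sigma=(\sigma_{pq})$, and for $\varrho\in G_m$ write $\varrho(x_m)=x_m+\sum_q t_q e_q$. Then the $(n+m)$-th column of $\sigma\varrho$ is $\sigma(\varrho(x_m))$, so the $(n+m,n+m)$-coefficient of $\sigma\varrho$ equals $\sigma_{n+m,n+m}+\sum_q t_q\,\sigma_{n+m,q}$. Thus it suffices to produce $\varrho\in G_m$ whose displacement $\varrho(x_m)-x_m$ has coordinates $t_q$ solving $\sigma_{n+m,n+m}+\sum_q t_q\,\sigma_{n+m,q}=1$. Only the $(n+m)$-th column of $\sigma\varrho$ enters here, so the behaviour of $\varrho$ on the remaining basis vectors is irrelevant for this computation.

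Next I would identify the displacements realized by $G_m$. A direct check on generators shows $E_{\beta_{mk}}^*$ sends $x_m\mapsto x_m+w-q(w)f_m$ with $w\in Q$ arbitrary, while the commutators $[E_{\alpha_{ir}},E_{\beta_{mk}}^*]$ and $[E_{\beta_{ir}}^*,E_{\gamma_{mk}}^*]$ supply the directions $x_i$ and $f_i$ for $i<m$. Consequently $\varrho(x_m)-x_m$ runs over vectors $w+\sum_{i<m}(a_ix_i+b_if_i)+cf_m$ with $w\in Q$ and $a_i,b_i\in A$ free, the coefficient $c$ being forced, by isotropy of $\varrho(x_m)$, to $c=-\bigl(q(w)+\sum_{i<m}a_ib_i\bigr)$. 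In particular the hyperbolic directions $x_i,f_i$ with $i<m$ are reachable cleanly (taking a single $a_i$ or $b_i$ nonzero forces $c=0$), whereas the $Q$-directions and the $f_m$-direction are reachable only up to a quadratic correction in $c$.

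Finally I would invoke unimodularity together with the stable range condition. Since $\sigma\in\GL$, its $(n+m)$-th row is unimodular; indeed this row lists the coordinates of the isotropic unimodular vector $\sigma^{-1}(f_m)$, and reading off the relevant entry of $\sigma\sigma^{-1}=I$ exhibits $1$ as an explicit combination of $\mbf{d}_{m\cdot},\mbf{e}_{m\cdot},\mbf{f}_{m\cdot}$. The hypothesis $m\ge l+1$ says exactly that there are $m-1\ge l$ hyperbolic planes $i<m$ available (the $m$-th plane is the one we are adjusting and so cannot be used), which is precisely what $SA_l$ requires. I would use $SA_l$ on the unimodular row, pivoting on these clean $x_i$- and $f_i$-coordinates, to reduce the linear equation $\sigma_{n+m,n+m}+\sum_q t_q\sigma_{n+m,q}=1$ to one solvable within the reachable displacements, and then read off the $t_q$, hence $\varrho$.

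The main obstacle is this last step. The family of displacements coming from $G_m$ is not an arbitrary linear family: the $f_m$-coefficient is quadratically tied to the $Q$- and hyperbolic coordinates through $c=-\bigl(q(w)+\sum a_ib_i\bigr)$. Matching this constrained family to the stable-range reduction of the unimodular row, and arranging the target coefficient to be exactly $1$ rather than merely a unit, is the delicate point; it is here that the presence of both $x_i$- and $f_i$-moves for $i<m$ (which let one absorb the quadratic correction) and the precise bound $m\ge l+1$ are essential.
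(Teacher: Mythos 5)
Your reformulation --- reading the $(n+m,n+m)$ entry of $\sigma\varrho$ off the column $\sigma(\varrho(x_m))$ and reducing the lemma to solving $\sigma_{n+m,n+m}+\sum_q t_q\,\sigma_{n+m,q}=1$ with $(t_q)$ ranging over the displacements $\varrho(x_m)-x_m$, $\varrho\in G_m$ --- is legitimate, but the argument stops exactly where the work begins. The description of the achievable displacements as $w+\sum_{i<m}(a_ix_i+b_if_i)+cf_m$ with $c=-\bigl(q(w)+\sum a_ib_i\bigr)$ is only checked on single generators; the orbit of $x_m$ under the full group $G_m$ needs a separate verification, since composing generators mixes coordinates (e.g.\ $[E_{\beta_{ir}}^*,E_{\gamma_{mk}}^*]$ does not fix $x_i$, so the composite displacements are not simply sums of the individual ones). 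More seriously, the decisive step --- actually producing $(t_q)$ from $SA_l$ --- is deferred to an unspecified ``pivoting'' and is explicitly flagged by you as the main obstacle. Unimodularity of the $(n+m)$-th row is not by itself enough: the entries $\sigma_{n+m,n+m}$ and $\sigma_{n+m,n+2m}$ are attached to the coordinate $t_{n+m}=0$ and to the quadratically constrained $t_{n+2m}=c$, so $SA_l$ applied to the whole row does not directly yield a solution; one must first show that $1-\sigma_{n+m,n+m}$ is reachable from the freely available coordinates, and that normalization is precisely the content of the lemma. As written, this is a plan with the crucial step left open, i.e.\ a genuine gap.

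For comparison, the paper argues on the $(n+m)$-th row rather than on a single entry, in three moves each of which supplies something your outline lacks. It takes the row $(u,v,w)$ and applies Roy's Remark~5.6 to find $\mu_1=E_{\beta}^*\in G_m$ carrying it into a unimodular vector $(0,v,w')$ of $H(P)$ (clearing the $Q$-part); it then uses $SA_l$ through a block transvection $\mu_2\in G_m$ to make $v+w'\gamma$ unimodular in $A^{m}$ (converting unimodularity in $A^{n+2m}$ into unimodularity in $A^{m}$); and finally it uses the transitivity of $\E_m(A)$ on unimodular rows of length $m\ge l+1$, embedded in $G_m$ as $\operatorname{diag}(I,\varepsilon,(\varepsilon^{t})^{-1})$, to turn the row into $(0,\ldots,0,1,\ast)$, landing on the exact value $1$ rather than merely a unit. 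To complete your column-based version you would need substitutes for all three of these steps; none appears in the proposal.
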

\begin{proof}
 Let $\sigma$ be the $3 \times 3$ block matrix corresponding to the orthogonal transformation
$\sigma \in \og_A(Q \!\perp\! H(A)^m)$ given by
						$$\sigma = \begin{pmatrix}
							   \sigma_{11}&\sigma_{12}&\sigma_{13}\\
							    \sigma_{21}&\sigma_{22}&\sigma_{23}\\
							     \sigma_{31}&\sigma_{32}&\sigma_{33}
							    \end{pmatrix},$$
               where $\sigma_{11}$ is an $n\times n$ matrix, $\sigma_{12}, \sigma_{13}$ are $n\times m$ matrices, $\sigma_{21},\sigma_{31}$ are $m\times n$ matrices and $\sigma_{22},\sigma_{23},\sigma_{32},\sigma_{33}$ are $m \times m$ matrices. Since $\sigma^{-1} \in \og_A(Q \!\perp\! H(A)^m)$, it also has a similar matrix description. Now $(\sigma_{21}, \sigma_{22}, \sigma_{23})$ is a unimodular vector in $M_n(A)\times (M_m(A))^2$. Let $(u,v,w)$ be the bottom row of $(\sigma_{21}, \sigma_{22}, \sigma_{23})$. Since $\sigma^{-1} \in \og_A(Q \!\perp\! H(A)^m)$, it also has a similar matrix description. Now $(\sigma_{21}, \sigma_{22}, \sigma_{23})$ is a unimodular vector in $M_n(A)\times (M_m(A))^2$. Let $(u,v,w)$ be the bottom row of $(\sigma_{21}, \sigma_{22}, \sigma_{23})$.  It is unimodular in $A^n\times A^{2m}$.
Also, $v$ is unimodular in $A^m \cong P$. Then, by \cite{MR0231844}*{Remark~5.6},
there exists an orthogonal transformation $\mu_1 = E_{\beta}^* \in G_m$
which maps $(u,v,w)$ into a unimodular vector  $(0,v,w') \in H(P)$.

Since $A$ satisfies the stable range condition $SA_l$ and $m \ge l+1$, there exists a matrix $\gamma \in M_m(A)$ such that $v'+w'\gamma$ is unimodular in $A^{m}$. Now set
\[
  \mu_2 = \begin{pmatrix}
           I&0&0\\
        0&I&0\\
        0&\gamma&I
          \end{pmatrix} \in G_m,
\]where $I$ denotes the identity matrix and $0$ denotes the zero matrix of the corresponding block size.

  Since $A$ satisfies stable range condition $SA_l$ and $m \ge l+1$, there is a product $\epsilon$ of elementary matrices such that $(v'+w'\gamma)\epsilon = (0,\ldots,0,1)$.

Set \[
  \mu_3 = \begin{pmatrix}
           I&0&0\\
        0&\varepsilon&0\\
        0&0&\varepsilon^{t^{-1}}
          \end{pmatrix} \in G_m.
\]
Then $\sigma \mu_1\mu_2\mu_3$ has $(n+m)^{th}$ row $(0,0,\ldots,1,w\varepsilon^{t^{-1}})$.
This completes the proof of the lemma.
 \end{proof}

Now we can prove the following decomposition theorem.

\begin{theorem}[{\bf Decomposition Theorem}]
 Let $m \ge l+2$. Then every element of $\EO_A(Q \!\perp\! H(A)^m)$ has a reduced $FDG$-decomposition.
\end{theorem}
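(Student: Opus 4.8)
The plan is to reduce $\theta$ to the stabilized copy of $\og_A(Q\!\perp\!H(A)^{m-1})$ by clearing the last hyperbolic plane, and then to reassemble the clearing factors into the three prescribed subgroups. Since $m\ge l+2\ge l+1$, Lemma~\ref{slem1} supplies $\varrho\in G_m$ so that $\theta_1:=\theta\varrho$ carries a $1$ in its $(n+m,n+m)$ position. Writing $\mu=\varrho^{-1}\in G_m$, it then suffices to exhibit an $FD$-factorization $\theta_1=\eta\xi$ with $\eta\in F_m$ and $\xi\in D_m$, for then $\theta=\eta\xi\mu$ is the required $FDG$-decomposition.

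With a unit in the $(n+m,n+m)$ entry I would use the Dickson--Siegel--Eichler--Roy transvections attached to the last plane to normalize the images of $x_m$ and $f_m$. The generators $E_{\alpha_{mj}}$ and the commutators $[E_{\alpha_{mj}},E_{\delta_{il}}]$, $[E_{\alpha_{mj}},E_{\beta_{ik}}^{*}]$ of $D_m$ act along the $x_m$-axis and clear the part of the $(n+m)$-th column lying outside the $P$-block; the defining relations $T^{t}\Psi T=\Psi$ displayed after \eqref{identity} then force the paired entries of the $(n+m)$-th row to vanish as well. The generators $E_{\beta_{mk}}^{*}$ and the commutators $[E_{\alpha_{ij}},E_{\beta_{mk}}^{*}]$, $[E_{\beta_{ij}}^{*},E_{\gamma_{mk}}^{*}]$ of $C_m\subseteq G_m$ perform the same normalization along the $f_m$-axis. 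After multiplying on the appropriate sides, the result $\eta_0:=c\,\theta_1\,\xi^{-1}$, with $c\in C_m$ and $\xi\in D_m$, fixes the hyperbolic pair $(x_m,f_m)$, hence lies in the image of the stabilization map, and is moreover a product of level-$(m-1)$ elementary generators, so $\eta_0\in\EO_A(Q\!\perp\!H(A)^{m-1})$. Thus $\theta_1=c^{-1}\eta_0\xi$; since $\EO_A(Q\!\perp\!H(A)^{m-1})$ normalizes $C_m$ by Proposition~\ref{normal}, we have $c^{-1}\eta_0\in C_m\,\EO_A(Q\!\perp\!H(A)^{m-1})=\EO_A(Q\!\perp\!H(A)^{m-1})\,C_m=F_m$, and therefore $\theta_1=(c^{-1}\eta_0)\,\xi\in F_m D_m$.

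Finally I would upgrade this to a reduced decomposition. The $\EO_A(Q\!\perp\!H(A)^{m-1})$-part of $\eta$ has a zero $(n+m-1,n+m)$ entry after stabilization, so the obstruction lives entirely in the $C_m$-part; it is governed precisely by generators of the form $[E_{\alpha_{m-1,j}},E_{\beta_{mk}}^{*}]\in C_m$, whose $P$-block carries a nonzero $(m-1,m)$ coefficient. Killing the $(n+m-1,n+m)$ entry thus amounts to a unimodular correction inside the remaining $m-1$ hyperbolic planes, and this is exactly where the stronger hypothesis $m\ge l+2$, i.e.\ $m-1\ge l+1$, is used; the correction is absorbed by right-multiplying $\eta$ by a suitable $c'\in C_m$ (which keeps $\eta\in F_m$) and then pushing $c'^{-1}$ past $\xi$ into $G_m$ through the commutator relations of Lemma~\ref{lem1}. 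The main obstacle is the bookkeeping in the middle step: checking that the transvections clearing each axis can be chosen within $C_m$, $D_m$, and $G_m$, that their conjugates remain in these subgroups, and --- most delicately --- that the residual factor $\eta_0$ is genuinely elementary at level $m-1$ rather than merely orthogonal. All of this rests on the commutator identities of Lemma~\ref{lem1} together with the normality established in Proposition~\ref{normal}.
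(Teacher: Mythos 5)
Your strategy diverges from the paper's and has a genuine gap at its central step. After producing $\theta_1=\theta\varrho$ with a $1$ in the $(n+m,n+m)$ position and clearing the last hyperbolic plane by factors $c\in C_m$ and $\xi\in D_m$, you assert that the residual element $\eta_0=c\,\theta_1\,\xi^{-1}$, which fixes the hyperbolic pair $(x_m,f_m)$ and hence lies in $\og_A(Q\!\perp\!H(A)^{m-1})$, is ``moreover a product of level-$(m-1)$ elementary generators, so $\eta_0\in\EO_A(Q\!\perp\!H(A)^{m-1})$.'' No argument is given, and none of the tools you cite (Lemma~\ref{lem1}, Proposition~\ref{normal}) yields this. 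What the construction actually gives is only $\eta_0\in\EO_A(Q\!\perp\!H(A)^{m})\cap\og_A(Q\!\perp\!H(A)^{m-1})$, and the claim that this intersection is contained in $\EO_A(Q\!\perp\!H(A)^{m-1})$ is precisely the injective-stability statement that the Decomposition Theorem exists to prove: it is the content of the isomorphism $KO_{1,m}\to KO_{1,m+1}$ in the final section, whose proof \emph{uses} the Decomposition Theorem. So your argument is circular at exactly the point you yourself flag as most delicate. The analogous step for $\GL_n$ --- that an elementary matrix which happens to stabilize the last coordinate is a product of smaller elementary matrices --- is the whole difficulty of injective $K_1$-stability and cannot be absorbed into bookkeeping.

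The paper avoids this trap by never descending an arbitrary element to level $m-1$. It first shows that any $FDG$-decomposition can be converted to a reduced one (this is where $m\ge l+2$ and Lemma~\ref{slem1} applied at level $m-1$ enter, much as in your final paragraph), and then proves that the set $F_mD_mG_m$ is stable under left multiplication by a generating set of $\EO_A(Q\!\perp\!H(A)^m)$, namely $F_m$ itself together with the commutators $[E_{\delta_{mj}},E_{\beta_{m-1,k}}^*]$; since the identity trivially admits an $FDG$-decomposition, this forces $\EO_A(Q\!\perp\!H(A)^m)=F_mD_mG_m$, and only membership of explicitly constructed conjugates in the three subgroups ever needs to be checked. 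To salvage your route you would have to prove independently that every element of $\EO_A(Q\!\perp\!H(A)^{m})$ fixing the last hyperbolic pair lies in $F_m$, which is essentially the theorem itself; the generator-by-generator induction is therefore not optional here.
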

\begin{proof}
 We first show that if $\theta$ has an $FDG$-decomposition, then it has a reduced one.

 Let $\eta\xi\mu$ be an $FDG$-decomposition of $\theta$. Write
 \[\eta = \begin{pmatrix}
           \eta_{11}&\eta_{12}&0&\eta_{14}&0\\
           \eta_{21}&\eta_{22}&0&\eta_{24}&0\\
           0&0&1&0&0\\
           \eta_{41}&\eta_{42}&0&\eta_{44}&0\\
           0&0&0&0&1
          \end{pmatrix},
\] where the block matrix $\eta_{11}$ is of size $n\times n$, the block matrices $\eta_{12}, \eta_{14}$ are of size $n\times {(m-1)}$, $\eta_{21}, \eta_{41}$ are of size ${(m-1)}\times n$ and $\eta_{22},\eta_{24},\eta_{42},\eta_{44}$ are of size ${(m-1)}\times {(m-1)}$.
and set
\[\eta_1 =
   \begin{pmatrix}
    \eta_{11}&\eta_{12}&\eta_{14}\\
           \eta_{21}&\eta_{22}&\eta_{24}\\
           \eta_{41}&\eta_{42}&\eta_{44}
     \end{pmatrix}.
\]
By definition, $\eta_1 \in \EO_A(Q\!\perp\! H(A)^{m-1})$.
Since $m \ge l+2$, it follows from Lemma~\ref{slem1} that there is an element $\mu_1 \in G_{m-1}$ such that the $(n+m-1,n+m-1)^{th}$ coefficient of $\eta_1\mu_1$ is $1$. Identifying $\mu_1$ with its image under the stabilization map $$\EO_A(Q\!\perp\! H(A)^{m-1}) \longrightarrow \EO_A(Q\!\perp\! H(A)^{m-1}\!\perp\! H(A)),$$ we have $\mu_1 \in G_m$ and the $(n+m-1,n+m-1)^{th}$ coefficient of $\eta\mu_1$ is $1$. Also, $\mu_1 \in F_m \cap G_m$ and $\mu_1$ normalizes $D_m$. Thus $(\eta \mu_1)(\mu_1^{-1}\xi\mu_1)(\mu_1^{-1}\mu)$ is an $FDG$-decomposition of $\theta$ such that the $(n+m-1,n+m-1)^{th}$ coefficient of $\eta\mu_1$ is $1$.

Choose an element $\alpha \in \hom_A(Q,P)$ such that the $(n+m-1,n+m)^{th}$ coefficient of $\eta\mu_1 [E_{\alpha_{m-1,j}},E_{\beta_{mk}}^*]$ is $0$. Choose $\delta \in \hom_A(Q,P)$ such that the $(n+m,n+m-1)^{th}$ coefficient of $\mu_1^{-1}\xi \mu_1 [E_{\delta_{mj}},E_{\beta_{m-1,k}}^*]$ is $0$.
Let $\mu_2 = [E_{\alpha_{m-1,j}},E_{\beta_{mk}}^*]$ and  $\mu_3 = [E_{\delta_{mj}},E_{\beta_{m-1,k}}^*]$.
Then $$\mu_2^{-1}(\mu_1^{-1} \xi \mu_1\mu_3)\mu_2 = \eta_2 \xi_1$$
for some $\eta_2 \in \EO_A(Q\!\perp\! H(A)^{m-1}) \subseteq F_m$ and some $\xi \in D_m$.
Thus $$\theta = \eta\xi\mu = (\eta\mu_1\mu_2)(\mu_2^{-1}(\mu_1^{-1}\xi\mu_1\mu_3)\mu_2)
(\mu_2^{-1}\mu_3^{-1}\mu_1^{-1}\mu)$$ which is a reduced $FDG$-decomposition of
$\theta$.

We now prove that every element of $\EO_A(Q\!\perp\! H(A)^m)$ does have an $FDG$-decomposition. In order to do this, we consider the generators $\varepsilon$ of $\EO_A(Q \!\perp\! H(A)^m)$ and  show that $\varepsilon F_mD_mG_m \subseteq F_mD_mG_m$. It would follow from this that $\EO_A(Q \!\perp\! H(A)^m) = F_mD_mG_m$. Thus, by the first part of the proof, it is enough to prove that $\varepsilon \eta\xi\mu \in F_mD_mG_m$ for each reduced $FDG$-decomposition $\eta\xi\mu$. The rest of the proof shows this.

The commutator relations in the Lemma~\ref{lem1} show that $F_m$ and the matrices $[E_{\delta_{mj}},E_{\beta_{m-1,k}}^*]$ generate $\EO_A(Q\!\perp\! H(A)^m)$, where $\rank(Q)= n$. 
Evidently, \[F_m(F_mD_mG_m) \subseteq F_mD_mG_m.\]

We now consider an element with a reduced $FDG$-decomposition
$\eta\xi\mu$. Since the $(n+m-1,n+m)^{th}$ coefficient of $\eta$ is
$0$, $\eta$ can be expressed as a product $\eta = \eta_3\eta_4$,
where $\eta_3 \in C_m$ such that the $(n+m-1,n+m)^{th}$ coefficient of
$\eta_3$ is $0$ and $\eta_4 \in \EO_A(Q\!\perp\! H(A)^m)$. By a
straightforward computation, one can show that
$$[E_{\delta_{mj}},E_{\beta_{m-1,k}}^*]\eta_3
[E_{\delta_{mj}},E_{\beta_{m-1,k}}^*]^{-1} \in F_m.$$ Clearly,
$\EO_A(Q\!\perp\! H(A)^m)$ normalizes $D_m$. Thus
\[
[E_{\delta_{mj}},E_{\beta_{m-1,k}}^*]\eta\xi\mu
= ([E_{\delta_{mj}},E_{\beta_{m-1,k}}^*]\eta_3
[E_{\delta_{mj}},E_{\beta_{m-1,k}}^*]^{-1}\eta_4)
(\eta_4^{-1}[E_{\delta_{mj}},E_{\beta_{m-1,k}}^*]\eta_4 \xi)\mu\]
which is an $FDG$-decomposition.
\end{proof}

\section{Normality under stable range}

In this section, we prove the normality under the assumption that $A$ satisfies the stable range
condition $SA_l$.
\begin{theorem}\label{normal2}
 Let $A$ be a commutative ring in which $2$ is invertible. Suppose $A$ satisfies the stable range condition $SA_l$. Then, for all $m > l$, $\EO_A(Q\!\perp\! H(A)^m)$ is normal in $\og_A(Q\!\perp\! H(A)^m)$.
\end{theorem}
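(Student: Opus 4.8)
The plan is to mimic the proof of the normality theorem for $m \ge \dim\max(A)+2$, replacing the appeal to Roy's transitivity theorem (which forces the hyperbolic rank to exceed $\dim \max(A)$) by a constructive argument valid under the weaker hypothesis $m \ge l+1$, built from Lemma~\ref{slem1} and the stable range condition $SA_l$. First I would record the reduction principle already implicit in the earlier proof: it suffices to show that every $\sigma \in \og_A(Q\!\perp\! H(A)^m)$ admits $\varrho \in \EO_A(Q\!\perp\! H(A)^m)$ with $\sigma\varrho \in \og_A(Q\!\perp\! H(A)^{m-1})$. Indeed, setting $\tau = \sigma\varrho$, for any $g \in \EO_A(Q\!\perp\! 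H(A)^m)$ one computes $\sigma g\sigma^{-1} = \tau(\varrho^{-1}g\varrho)\tau^{-1}$; the inner factor $\varrho^{-1}g\varrho$ lies in $\EO_A(Q\!\perp\! H(A)^m)$ since the latter is a group, and conjugation by $\tau$ preserves $\EO_A(Q\!\perp\! H(A)^m)$ by Proposition~\ref{normal}, whence $\sigma g\sigma^{-1} \in \EO_A(Q\!\perp\! H(A)^m)$. Since an element of $\og_A(Q\!\perp\! H(A)^m)$ lies in the stabilized subgroup $\og_A(Q\!\perp\! H(A)^{m-1})$ exactly when it fixes the last hyperbolic pair $(x_m,f_m)$, the whole theorem reduces to producing $\varrho \in \EO_A(Q\!\perp\! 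H(A)^m)$ such that $\sigma\varrho$ fixes both $x_m$ and $f_m$.

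The construction of $\varrho$ proceeds in two stages, first arranging that $\sigma\varrho$ fixes $x_m$ and then, without disturbing $x_m$, that it fixes $f_m$. For the first stage I would invoke Lemma~\ref{slem1}, applicable precisely because $m \ge l+1$, to obtain $\varrho_1 \in G_m \subseteq \EO_A(Q\!\perp\! H(A)^m)$ placing a $1$ in the $(n+m,n+m)^{th}$ position of $\sigma\varrho_1$; this makes the $x_m$-component of $\sigma\varrho_1(x_m)$ a unit. With this pivot in hand, the remaining components of the vector $\sigma\varrho_1(x_m)$ can be cleared by the elementary generators $E_{\alpha_{mj}}$ and $E_{\beta_{mj}}^*$, $1\le j\le n$: these subtract arbitrary elements of $Q$ from the $Q$-component (the scaling factor being the unit pivot), while the quadratic corrections $-\tfrac{1}{2}q(\cdot)$ built into the generators are absorbed automatically, and the defining relations \eqref{identity} of the orthogonal group force the dual $P^*$-components to vanish simultaneously. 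After multiplying by a further product $\varrho_2$ of such generators, $\sigma\varrho_1\varrho_2$ fixes $x_m$. A symmetric elimination, now carried out with generators that already fix $x_m$, produces $\varrho_3$ so that $\sigma\varrho_1\varrho_2\varrho_3$ fixes $f_m$ as well. By orthogonality, once the pair $(x_m,f_m)$ is fixed the corresponding rows are automatically correct, so $\sigma\varrho \in \og_A(Q\!\perp\! H(A)^{m-1})$ for $\varrho = \varrho_1\varrho_2\varrho_3$, completing the reduction.

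The hard part will be the elimination in the second paragraph: one must verify that every clearing operation is realized by honest elementary generators $E_{\alpha_{mj}}, E_{\beta_{mj}}^*$ (or by their commutators, which by the relations of Lemma~\ref{lem1} already lie in the subgroups $C_m$, $D_m$, $G_m$), that the quadratic-form corrections cancel rather than reintroduce lower-order terms, and---most delicately---that the second stage can be performed within the stabilizer of $x_m$, so that fixing $f_m$ does not perturb the already-normalized $x_m$. The stable range condition $SA_l$, which enters through Lemma~\ref{slem1}, is exactly what guarantees that the relevant rows remain unimodular throughout, so that a unit pivot is available at each step and the process terminates; this is the ingredient that lets us dispense with the hypothesis $m \ge \dim \max(A)+2$ used in the previous section and conclude normality whenever $m > l$.
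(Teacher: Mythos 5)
Your proposal follows the same route as the paper's own proof: use Lemma~\ref{slem1} (valid since $m\ge l+1$) to obtain a unit pivot in the $(n+m,n+m)^{th}$ position, clear the rest of the $(n+m)^{th}$ row and the corresponding dual column by explicit products of elementary generators so as to land in the stabilized subgroup $\og_A(Q\!\perp\! H(A)^{m-1})$, and then conclude by Proposition~\ref{normal} exactly as in your reduction principle. The elimination you defer as the ``hard part'' is precisely what the paper carries out via the explicit products $\varrho_2,\varrho_3,\varrho_4$ of commutators $[E_{\alpha_{mj}},E_{\beta_{ik}}^*]$, $[E_{\beta_{ir}}^*,E_{\gamma_{mk}}^*]$, etc., together with the orthogonality relations~\eqref{identity} to get the row for free once the column is fixed, so the two arguments coincide in substance.
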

\begin{proof}
  Let $\eta \in \EO_A(Q \!\perp\! H(A)^m)$, where $\rank(Q) = n$. By Lemma~\ref{slem1}, there is an element $\varrho_1$ in $G_m \subseteq \EO_A(Q \!\perp\! H(A)^m)$ such that the $(n+m,n+m)^{th}$ coefficient of $\eta\varrho_1$ is $1$. Then there is a matrix $\varrho_2 = \prod_{i=1}^{m-1}[E_{\alpha_{mj}}, E_{\beta_{ik}}^*]$ such that $\eta\varrho_1\varrho_2$ has $0$ in the first $n+m-1$ entries of its $(n+m)^{th}$ row and $1$ in the $(n+m)^{th}$ entry of this row. It follows that there is  a matrix
 $\varrho_3 = \prod_{i=1}^m [E_{\beta_{ir}}^*, E_{\gamma_{mk}}^*] \prod_{i=1}^{m-1}[E_{\alpha_{ir}}, E_{\beta_{mk}}^*]E_{\gamma_{mj}}^*$ such that $\varrho_3\eta\varrho_1\varrho_2$ has the same $m^{th}$ row as $\eta\varrho_1\varrho_2$ and the same $m^{th}$ column as the $(n+2m)\times(n+2m)$ identity matrix. For any matrix
 \[
\sigma= \begin{pmatrix}
 \sigma_{11}&\sigma_{12}&\sigma_{13}\\
                 \sigma_{21}&\sigma_{22}&\sigma_{23}\\
                 \sigma_{31}&\sigma_{32}&\sigma_{33}
\end{pmatrix} \in \og_A(Q\!\perp\! H(A)^m),
 \]
 it follows from the Equation~\eqref{identity} that
we get the $(n+2m,n+2m)^{th}$ coefficient of $\varrho_3\eta\varrho_1\varrho_2$ is $1$. Then there is a matrix 
\[
\varrho_4 =  \prod_{i=1}^{m-1}[E_{\alpha_{mk}}, E_{\beta_{ir}}^*]\prod_{i=1}^m [E_{\beta_{ir}}^*, E_{\gamma_{mk}}^*] \prod_{i=1}^{m}[E_{\alpha_{ir}}, E_{\delta_{mk}}] E_{\zeta_{mj}}
\] such that $\varrho_4\varrho_3\eta\varrho_1\varrho_2$ has the same $(n+m)^{th}$ row and $(n+m)^{th}$ column as $\varrho_3\eta\varrho_1\varrho_2$ and the same $(n+2m)^{th}$ column as the $(n+2m,n+2m)$ identity matrix. Now, it follows that $\varrho_4\varrho_3\eta\varrho_1\varrho_2$ has the same $(n+2m)^{th}$ row as the $(n+2m,n+2m)$ identity matrix. Thus, by the stabilization homomorphism, we have $\varrho_4\varrho_3\eta\varrho_1\varrho_2 \in O_A(Q\!\perp\! H(A)^{m-1})$, where $\rank(Q) = n$.
Let $\rho = \varrho_4\varrho_3\eta\varrho_1\varrho_2$. By Proposition~\ref{normal}, it follows that $\rho$ normalizes $\EO_A(Q\!\perp\! H(A)^m)$, where $\rank(Q)=n$. 
Since $\eta = \varrho_3^{-1}\varrho_4^{-1}\rho\varrho_2^{-1}\varrho_1^{-1}$, it follows that $\eta$ normalizes $\EO_A(Q\!\perp\! H(A)^m)$. Thus $\EO_A(Q\!\perp\! H(A)^m)$ is normal in $\og_A(Q\!\perp\! H(A)^m)$.
\end{proof}

\section{Stability of $K_1$}

In this section, we prove the following stability theorem
using the normality theorem of the previous section and the
decomposition theorem.

\begin{theorem}
 Let $A$ be a commutative ring of stable rank $l$ in which $2$ is invertible.
 Then, for all $m > l$, $KO_{1,m}(Q\!\perp\! H(A)^m)$ is a group. Further, the canonical map
 \[KO_{1,r}(Q \!\perp\! H(A)^{r}) \longrightarrow KO_{1,m}(Q\!\perp\! H(A)^m) \] is surjective for $l\leq r < m$, and the canonical homomorphism
 \[KO_{1,m}(Q \!\perp\! H(A)^m) \longrightarrow KO_{1,m+1}(Q \!\perp\! H(A)^{m+1})\] is an
 isomorphism.
\end{theorem}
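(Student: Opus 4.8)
The plan is to establish the three assertions in sequence, leveraging the normality theorem (Theorem~\ref{normal2}) and the Decomposition Theorem, both of which hold precisely in the range $m > l$. First I would show that $KO_{1,m}(Q\!\perp\! H(A)^m)$ is a group for $m > l$. By Theorem~\ref{normal2}, $\EO_A(Q\!\perp\! H(A)^m)$ is normal in $\og_A(Q\!\perp\! H(A)^m)$ whenever $A$ satisfies $SA_l$ and $m > l$. Since the quotient of a group by a normal subgroup carries a natural group structure, the coset space $KO_{1,m}(Q\!\perp\! H(A)^m) = \og_A(Q\!\perp\! H(A)^m)/\EO_A(Q\!\perp\! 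H(A)^m)$ becomes a genuine group. This first step is essentially immediate from the normality result.

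Next I would prove surjectivity of the canonical map $KO_{1,r} \to KO_{1,m}$ for $l \le r < m$. By functoriality it suffices to treat the single step $r = m-1 \to m$ (with $m-1 \ge l$, i.e. $m \ge l+1$), and then compose. Given a class in $KO_{1,m}$ represented by some $\sigma \in \og_A(Q\!\perp\! H(A)^m)$, the goal is to modify $\sigma$ on the right by an element of $\EO_A(Q\!\perp\! H(A)^m)$ so that the result lies in the image of the stabilization map from $\og_A(Q\!\perp\! H(A)^{m-1})$. The tool for this is exactly the row-and-column reduction carried out in the proof of Theorem~\ref{normal2}: Lemma~\ref{slem1} produces $\varrho_1 \in G_m$ placing a $1$ in the $(n+m,n+m)$ position, and the further elementary factors $\varrho_2, \varrho_3, \varrho_4$ clear the $(n+m)^{th}$ and $(n+2m)^{th}$ rows and columns, yielding $\varrho_4\varrho_3\sigma\varrho_1\varrho_2 \in \og_A(Q\!\perp\! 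H(A)^{m-1})$. Since all of $\varrho_1,\dots,\varrho_4$ lie in $\EO_A(Q\!\perp\! H(A)^m)$, the class of $\sigma$ coincides with the class of an element coming from $\og_A(Q\!\perp\! H(A)^{m-1})$, proving surjectivity.

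For the isomorphism $KO_{1,m} \to KO_{1,m+1}$ I would first note it is a homomorphism: the stabilization map $\og_A(Q\!\perp\! H(A)^m) \to \og_A(Q\!\perp\! H(A)^{m+1})$ is a group homomorphism carrying $\EO_A(Q\!\perp\! H(A)^m)$ into $\EO_A(Q\!\perp\! H(A)^{m+1})$, so it descends to the quotients. Surjectivity is already the case $r = m$ of the previous paragraph. The crux is injectivity: if $\sigma \in \og_A(Q\!\perp\! H(A)^m)$ becomes elementary after stabilization, say its image lies in $\EO_A(Q\!\perp\! H(A)^{m+1})$, one must show $\sigma \in \EO_A(Q\!\perp\! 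H(A)^m)$. Here I would invoke the Decomposition Theorem: the stabilized element admits a reduced $FDG$-decomposition $\eta\xi\mu$ with $\eta \in F_m$, $\xi \in D_{m+1}$, $\mu \in G_{m+1}$, and the reducedness together with the fact that $\sigma$ already fixes the $H(A)$-summand it was stabilized along forces the $D_{m+1}$ and $G_{m+1}$ pieces (the factors genuinely involving the new index $m+1$) to be trivial modulo $\EO_A(Q\!\perp\! H(A)^m)$, so that $\sigma$ itself is elementary.

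The main obstacle I expect is this injectivity argument. The surjectivity and group-structure claims follow fairly mechanically from the normality theorem and its reduction procedure, but injectivity requires a careful bookkeeping of which subscripts appear in each factor of the $FDG$-decomposition and a verification that the factors carrying the new hyperbolic index can be absorbed into $\EO_A(Q\!\perp\! H(A)^m)$ without disturbing the original block. Making this precise amounts to tracking the generators of $F_{m+1}, D_{m+1}, G_{m+1}$ against the block structure of an element stabilized from $\og_A(Q\!\perp\! H(A)^m)$ and showing that the reduced form is forced to lie in the stabilized copy of $\EO_A(Q\!\perp\! H(A)^m)$; this is where the detailed commutator relations of Lemma~\ref{lem1} and the normality of $D_m$ under $\EO_A$ will do the heavy lifting.
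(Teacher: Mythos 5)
Your overall architecture matches the paper's: the group structure and the surjectivity $KO_{1,r}\to KO_{1,m}$ are read off from Theorem~\ref{normal2} and the row-and-column reduction in its proof (the paper does exactly this, with an induction on $m-l$), and the injectivity of $KO_{1,m}\to KO_{1,m+1}$ is attacked via the $F_{m+1}D_{m+1}G_{m+1}$-decomposition of an element $\sigma\in \og_A(Q\!\perp\! H(A)^m)\cap \EO_A(Q\!\perp\! H(A)^{m+1})$. However, on the injectivity step --- which you yourself flag as the crux --- your proposal stops at a declaration of intent, and the mechanism you point to is not the one that actually closes the argument. You assert that ``reducedness together with the fact that $\sigma$ fixes the new $H(A)$-summand forces the $D_{m+1}$ and $G_{m+1}$ pieces to be trivial modulo $\EO_A(Q\!\perp\! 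H(A)^m)$,'' to be verified by ``bookkeeping of subscripts'' and the commutator relations of Lemma~\ref{lem1}. That is not how the paper's proof works, and subscript bookkeeping alone will not suffice.

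The two missing ingredients are these. First, one must show by an explicit block-matrix computation that the $D_{m+1}$-factor $\xi$ in fact lies in $G_{m+1}$: writing $\mu$ in the block form $\left(\begin{smallmatrix} I&\gamma&0\\ 0&\varepsilon&0\\ \vartheta&\psi&\varepsilon^{t^{-1}}\end{smallmatrix}\right)$ and using that the $(n+m+1)^{th}$ row of $\xi\mu$ equals that of the identity, one gets $w\varepsilon^{t^{-1}}=0$, hence $w=0$ and then $u=0$ for the $(n+m+1)^{th}$ row $(u,v,w)$ of $\xi$; this collapses the $F\!\cdot\! D\!\cdot\! G$ product to $\sigma=\eta_1(\mu_1\xi\mu)$ with $\eta_1\in\EO_A(Q\!\perp\! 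H(A)^m)$ and $\mu_1\xi\mu\in G_{m+1}\cap\og_A(Q\!\perp\! H(A)^m)$. Second --- and this is the external input your proposal never mentions --- to show $\mu_1\xi\mu\in G_m$ one identifies its $\GL$-block as a matrix $\varepsilon=\left(\begin{smallmatrix}\varepsilon'&0\\0&1\end{smallmatrix}\right)\in\E_{m+1}(A)$ with $\varepsilon'\in\GL_m(A)$, and then invokes the classical stability theorem for $K_1$ of the general linear group under the stable range condition to conclude $\varepsilon'\in\E_m(A)$. Without this appeal to linear $K_1$-stability there is no way to descend the new-index factors into $\EO_A(Q\!\perp\! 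H(A)^m)$, so as written your injectivity argument has a genuine gap.
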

\begin{proof}
By Theorem~\ref{normal2}, we get $KO_{1,m}(Q\!\perp\! H(A)^m)$ is a group and the map
$$KO_{1,m-1}(Q\!\perp\! H(A)^{m-1}) \longrightarrow KO_{1,m}(Q\!\perp\! H(A)^m)$$ is surjective.
By induction on $m-l$, we obtain that the map $$KO_{1,r}(Q\!\perp\! H(A)^r) \longrightarrow KO_{1,m}(Q\!\perp\! H(P))$$ is surjective for $l\leq r < m$.

To prove the final assertion, let $\sigma \in \og_A(Q\!\perp\! H(A)^m) \cap \EO_A(Q\!\perp\! H(A)^m\!\perp\! H(A))$. Let $\eta\xi\mu$ be an $F_{(m+1)}D_{(m+1)}G_{(m+1)}$-decomposition of $\sigma$. Since the $(n+m+1)^{th}$ row of $\eta$ coincides with that of the $(n+2(m+1))\times (n+2(m+1))$ identity matrix, it follows that the $(n+m+1)^{th}$ row of $\eta\xi\mu$ coincides with the $(n+m+1)^{th}$ row of $\xi\mu$. Thus the $(n+m+1)^{th}$ row of $\xi\mu$ coincides with that of the $(n+2(m+1))\times (n+2(m+1))$ identity matrix. We can write the matrix $\mu$ as
\[
\mu= \begin{pmatrix}
I&\gamma&0\\
0&\varepsilon&0\\
\vartheta&\psi&\varepsilon^{t^{-1}}
\end{pmatrix},
\]where $I$ is an $n\times n$ identity matrix, $\gamma$ is an $n\times m$ matrix, $\varepsilon$ is an $m\times m$ invertible matrix, $\vartheta$ and $\psi$ are matrices of size $m\times n$ and $m \times m$ respectively.

If $(u,v,w)$ denotes the $(n+m+1)^{th}$ row of $\xi$, then the $(n+m+1)^{th}$ row of $\xi\mu$ is
\[
\begin{pmatrix}
 u,&v,&w
\end{pmatrix}\begin{pmatrix}
          I&\gamma&0\\
          0&\varepsilon&0\\
          \vartheta&\psi&\varepsilon^{t^{-1}}
          \end{pmatrix} = \begin{pmatrix}
 u+w\vartheta,&u\gamma+v\varepsilon+w\psi,&w\varepsilon^{t^{-1}}
\end{pmatrix}
  \]
  The $(n+m+1)^{th}$ row of $\xi\mu$ coincides with that of the $n+2(m+1)\times n+2(m+1)$ identity matrix. Hence $w(\varepsilon^{t})^{-1} = 0$. Since $(\varepsilon^{t})^{-1}$ is invertible, we get $w=0$. This implies that $u=0$. Thus $\xi \in G_{m+1}$.

  Now write $\eta = \eta_1\mu_1$, where $\eta_1 \in \EO_A(Q\!\perp\! H(A)^m)$ and $\mu_1 \in C_{m+1}\subseteq G_{m+1}$.

  Then $\sigma = \eta_1\mu_1\xi\mu$ and $\mu_1\xi\mu \in G_{m+1} \cap \og_A(Q\!\perp\! H(A)^m)$. It suffices to show that $\mu_1\xi\mu \in \EO_A(Q\!\perp\! H(A)^m)$. In fact, we show that $\mu_1\xi\mu \in G_m$.

  Write \[
        \mu_1\xi\mu = \begin{pmatrix}
                       I&\gamma&0\\
                0&\varepsilon&0\\
                \vartheta&\delta&\varepsilon^{t^{-1}}
                      \end{pmatrix}.
     \]
Since $\mu_1\xi\mu \in \og_A(Q\!\perp\! H(A)^m)$, it follows that $\gamma, \delta$ have their last column $0$ and $\vartheta,\delta$ have their last row $0$. Also, it follows that $\varepsilon \in \GL_m(A)$. From the definition of $G_{m+1}$, we see that $\varepsilon$ is an $(m+1)\times(m+1)$ matrix of the form
\[
    \varepsilon = \begin{pmatrix}
                     \varepsilon'&0\\
                     0&1
                    \end{pmatrix} \in \E_{m+1}(A)
 \]

Thus $\varepsilon' \in \E_{m+1}(A)\cap \GL_m(A)$. Since $A$ satisfies the stable range condition, by the stability for $K_1$ of the general linear group, we have $\varepsilon' \in \E_m(A)$.

Thus $\mu_1\xi\mu$ lies in $G_m$. Hence the canonical homomorphism
 \[KO_{1,m}(Q\!\perp\! H(A)^m) \longrightarrow KO_{1,m+1}(Q \!\perp\! H(A)^m)\] is an isomorphism.
\end{proof}

\begin{ackn}
The author would like to acknowledge her deep gratitude to Prof.\,B. Sury for his valuable suggestions. The author would like to thank Prof.\,Guoping Tang for helpful discussions. The author is also indebted to 
Prof.\,Ravi A. Rao for his support and encouragement. 
\end{ackn}

\begin{bibdiv}
\begin{biblist}

\bib{aa}{article}{
      author={Ambily, A.~A.},
       title={{Yoga of commutators in Roy's elementary orthogonal group}},
        date={2013},
     journal={arXiv:1305.2826 [math.AC]},
}

\bib{aarr}{article}{
      author={Ambily, A.~A.},
      author={Rao, Ravi~A.},
       title={{Extendability of quadratic modules over a polynomial extension
  of an equicharacteristic regular local ring}},
     journal={Preprint (2013)},
         url={http://www.isibang.ac.in/~statmath/eprints/2013/11.pdf},
}

\bib{MR1765866}{article}{
      author={Bak, Anthony},
      author={Guoping, Tang},
       title={Stability for {H}ermitian {$K_1$}},
        date={2000},
        ISSN={0022-4049},
     journal={J. Pure Appl. Algebra},
      volume={150},
      number={2},
       pages={109\ndash 121},
         url={http://dx.doi.org/10.1016/S0022-4049(99)00035-3},
}

\bib{MR2061845}{article}{
      author={Bak, Anthony},
      author={Petrov, Viktor},
      author={Tang, Guoping},
       title={Stability for quadratic {$K_1$}},
        date={2003},
        ISSN={0920-3036},
     journal={$K$-Theory},
      volume={30},
      number={1},
       pages={1\ndash 11},
         url={http://dx.doi.org/10.1023/B:KTHE.0000015340.00470.a9},
}

\bib{MR0252382}{article}{
      author={Knebusch, Manfred},
       title={Isometrien \"uber semilokalen {R}ingen},
        date={1969},
        ISSN={0025-5874},
     journal={Math. Z.},
      volume={108},
       pages={255\ndash 268},
}

\bib{MR497932}{article}{
      author={Kope{\u\i}ko, V.~I.},
       title={{Stabilization of symplectic groups over a ring of polynomials}},
        date={1978},
        ISSN={0368-8666},
     journal={Mat. Sb. (N.S.)},
      volume={106(148)},
      number={1},
       pages={94\ndash 107},
}

\bib{MR2235330}{book}{
      author={Lam, T.~Y.},
       title={{Serre's problem on projective modules}},
      series={{Springer Monographs in Mathematics}},
   publisher={Springer-Verlag},
     address={Berlin},
        date={2006},
        ISBN={978-3-540-23317-6; 3-540-23317-2},
         url={http://dx.doi.org/10.1007/978-3-540-34575-6},
}

\bib{MR2033642}{article}{
      author={Petrov, V.~A.},
       title={Odd unitary groups},
        date={2003},
        ISSN={0373-2703},
     journal={Zap. Nauchn. Sem. S.-Peterburg. Otdel. Mat. Inst. Steklov.
  (POMI)},
      volume={305},
       pages={195\ndash 225},
         url={http://dx.doi.org/10.1007/s10958-005-0372-z},
}

\bib{MR0387276}{article}{
      author={Reiter, H.},
       title={Witt's theorem for noncommutative semilocal rings},
        date={1975},
        ISSN={0021-8693},
     journal={J. Algebra},
      volume={35},
       pages={483\ndash 499},
}

\bib{MR0231844}{article}{
      author={Roy, Amit},
       title={{Cancellation of quadratic form over commutative rings}},
        date={1968},
        ISSN={0021-8693},
     journal={J. Algebra},
      volume={10},
       pages={286\ndash 298},
}

\bib{MR0469914}{article}{
      author={Suslin, A.~A.},
      author={Kope{\u\i}ko, V.~I.},
       title={{Quadratic modules and the orthogonal group over polynomial
  rings}},
        date={1977},
     journal={Zap. Nau\v cn. Sem. Leningrad. Otdel. Mat. Inst. Steklov.
  (LOMI)},
      volume={71},
       pages={216\ndash 250},
}

\bib{MR676359}{article}{
      author={Taddei, Giovanni},
       title={Invariance du sous-groupe symplectique \'el\'ementaire dans le
  groupe symplectique sur un anneau},
        date={1982},
        ISSN={0249-6321},
     journal={C. R. Acad. Sci. Paris S\'er. I Math.},
      volume={295},
      number={2},
       pages={47\ndash 50},
}

\bib{MR1609905}{article}{
      author={Tang, Guoping},
       title={{Hermitian groups and {$K$}-theory}},
        date={1998},
        ISSN={0920-3036},
     journal={$K$-Theory},
      volume={13},
      number={3},
       pages={209\ndash 267},
         url={http://dx.doi.org/10.1023/A:1007725531627},
}

\bib{MR0284476}{article}{
      author={Vaserstein, L.~N.},
       title={The stable range of rings and the dimension of topological
  spaces},
        date={1971},
        ISSN={0374-1990},
     journal={Funkcional. Anal. i Prilo\v zen.},
      volume={5},
      number={2},
       pages={17\ndash 27},
}

\bib{MR3003312}{article}{
      author={Yu, Weibo},
       title={Stability for odd unitary {$K_1$} under the {$\Lambda$}-stable
  range condition},
        date={2013},
        ISSN={0022-4049},
     journal={J. Pure Appl. Algebra},
      volume={217},
      number={5},
       pages={886\ndash 891},
         url={http://dx.doi.org/10.1016/j.jpaa.2012.09.003},
}

\end{biblist}
\end{bibdiv}
\end{document}